\definecolor{light}{rgb}{.9,.9,.9}
\definecolor{darkgreen}{rgb}{0,.6,0}
\numberwithin{equation}{section}
\newcommand{\R}{\mathbb R}
\newtheoremstyle{plain}
  {10pt}
  {10pt}
  {\it}
  {0pt}
  {\bf}
  {}
  {\newline}
  {}
\newtheoremstyle{definition}
  {10pt}
  {10pt}
  {}
  {0pt}
  {\bf}
  {}
  {\newline}
  {}
\theoremstyle{plain}
\definecolor{MyDarkBlue}{rgb}{0,0.29,0.7}
\theoremstyle{plain}
\newtheorem{theorem}{Theorem}[section]
\newtheorem{coro}[theorem]{Corollary}
\newtheorem{lemma}[theorem]{Lemma}
\newtheorem{prop}[theorem]{Proposition}
\theoremstyle{definition}
\newtheorem{definition}[theorem]{Definition}
\begin{document}

\title{Equilibria of plane convex bodies}
\author{Jonas Allemann, Norbert Hungerb\"uhler, Micha Wasem}
\date{\today}
\maketitle
\begin{abstract}
We obtain a formula for the number of horizontal equilibria of a planar convex body $K$ with respect to a center of mass $O$ in terms of the winding number of the evolute of $\partial K$ with respect to $O$. The formula extends to the case where $O$ lies \emph{on} the evolute of $\partial K$ and a suitably modified version holds true for non-horizontal equilibria. \end{abstract}
\section{Introduction}

We study the number of static equilibria of a planar convex body $K$ supported by a horizontal line subject to a uniform vertical gravity field. It is well-known that the number of static equilibria with respect to the centroid of a homogeneous body $K$ is $\geq 4$ (see~\cite{Domokos1994} and Proposition~\ref{number_equilibria} below). It was pointed out in~\cite{Varkonyi2006}, that this result is equivalent to the Four-vertex Theorem. For an arbitrary center of mass, one can find planar convex bodies with only one stable and one unstable equilibrium -- the 3-dimensional counterparts of such objects are known as \emph{roly-poly toys}. In~\cite{Varkonyi2006} it is shown that there exists a homogeneous convex roly-poly toy with exactly one stable and one unstable equilibrium, the so-called g\"omb\"oc -- thus answering a long-standing  conjecture by Arnol'd in the affirmative.

In this article, we provide a geometric characterisation of the number  $n$ of static equilibria of a planar convex body $K$ in terms of the winding number of the evolute of $\partial K$ with respect to a given center of mass $O$ of $K$: If $\partial K$ is parametrized by a positively oriented curve $\gamma$ and $O$ is not a point of the evolute of $\partial K$, then the winding number of the evolute of $\partial K$ is an integer $m\leq 0$ and the formula
\begin{equation}\label{main-formula}
n=2-2m
\end{equation}
holds true. We will show that this formula remains valid, if $O$ is a point of the evolute of $\partial K$, possibly even a cusp, but in this case, $m$ might be half-integer valued. Our main theorem is the following:

\begin{theorem}\label{main_arbitrary_zeros}
Let $K$ be a strongly convex compact set with $C^3$-boundary $\partial K$ such that the curvature of $\partial K$ has only finitely many stationary points, and let $O$ be a point
in the plane. Then 
the number $n$ of horizontal equilibria of $K$ with respect to $O$ is given by
$$
n=2-2m,
$$
where $0\geq m \in \frac12\mathbb Z$ is the winding number of the evolute of $\partial K$ with respect to $O$.
\end{theorem}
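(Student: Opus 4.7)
The plan is to encode the equilibrium condition as the vanishing of the real part of a single complex-valued map $\Psi$ on $S^1$, and to relate the number of its sign changes to the winding number $m$ of the evolute by an argument-principle-type count. Identifying $\R^2\simeq \C$ so that the inward unit normal satisfies $N=iT$, parametrizing $\gamma$ by arclength, and writing $\rho=1/\kappa$, I would introduce
\[
  \Psi(s):=\frac{E(s)-O}{T(s)}=\frac{\gamma(s)-O}{T(s)}+i\rho(s),\qquad E=\gamma+\rho N.
\]
Separating real and imaginary parts gives $\operatorname{Re}\Psi(s)=(\gamma(s)-O)\cdot T(s)=:F(s)$ and $\operatorname{Im}\Psi(s)=(\gamma(s)-O)\cdot N(s)+\rho(s)$, so the equilibria are exactly the zeros of $F$, while $\Psi(s)=0$ iff $O=E(s)$.

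If $O$ is not on the evolute, $\Psi\colon S^1\to \C^*$ has a well-defined winding number about $0$, and since $T$ winds once counterclockwise while $E-O$ winds $m$ times,
\[
  \operatorname{wind}(\Psi,0)=m-1.
\]
Differentiating $F$ using $T'=\kappa N$ and $\kappa\rho=1$ yields, at every equilibrium $s_0$,
\[
  F'(s_0)=1+\kappa(s_0)(\gamma(s_0)-O)\cdot N(s_0)=\kappa(s_0)\operatorname{Im}\Psi(s_0).
\]
Because $\kappa>0$, every equilibrium off the evolute is automatically non-degenerate, and the sign of $F'(s_0)$ matches that of $\operatorname{Im}\Psi(s_0)$. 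Consequently each crossing of the imaginary axis by $\Psi$ is clockwise around $0$: upper-half crossings $(\operatorname{Im}\Psi>0)$ proceed from the left half-plane to the right, lower-half crossings reverse this, and each crossing contributes $-1$ to the signed crossing count. Computing the winding as half the sum of signed imaginary-axis crossings yields $2(m-1)=-n$, i.e.\ $n=2-2m$. The inequality $m\le 0$ then drops out of $n\ge 2$, itself guaranteed by the existence of both a maximum and a minimum of $s\mapsto|\gamma(s)-O|$ on $S^1$.

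The remaining, and hardest, step is to cover the case where $O$ lies on the evolute (possibly at a cusp), when $\Psi$ acquires zeros and the classical winding number is undefined. There I would fix $m$ as the half-integer average of the windings of the evolute on the two sides of $O$, and either perturb $O$ transversely off the evolute and pass to the limit, or analyze $\Psi$ locally at each parameter $s_j$ with $\Psi(s_j)=0$. At a smooth arc of $E$ the standard picture applies: crossing the evolute creates a pair of equilibria while $m$ jumps by $\pm 1$, preserving $n=2-2m$. At a cusp one must bring in the local normal form of $E$ near a vertex of $\gamma$ and verify by a direct local count (with a degenerate equilibrium counted once) that the formula still holds; the hypothesis that $\kappa$ has only finitely many stationary points keeps the number of cusps finite, but the cusp case is the genuine technical obstacle.
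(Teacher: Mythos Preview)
For $O\notin e(S^1)$ your argument is correct and more elementary than the paper's. The paper parametrizes $\partial K$ by its support function $p$, so that equilibria are the zeros of $p'$, invokes an external zero-counting lemma to write $n=\frac{1}{\pi}\int_0^{2\pi}\frac{p''^2-p'p'''}{p'^2+p''^2}\,\mathrm d\varphi$, and identifies this with $-2$ times the winding of $\varphi\mapsto(p'',p')$, which differs from $m$ by $1$ because $e(\varphi)=R(\varphi)(-p'',p')^\top$. In the angle parametrization your map is exactly $\Psi=p'+ip''$, so both proofs study the same object; the difference is that you bypass the integral formula by the direct observation $F'=\kappa\operatorname{Im}\Psi$, which forces every crossing of the imaginary axis by $\Psi$ to be clockwise and hence exhibits the winding of $\Psi$ about $0$ simultaneously as $m-1$ and as $-n/2$. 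This route is self-contained and avoids the external references.

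When $O$ lies on the evolute there are genuine gaps. First, defining $m$ as ``the half-integer average of the windings on the two sides of $O$'' is inadequate: at a transverse self-intersection of $e$ there are four neighbouring regions, and at a cusp the two ``sides'' subtend angles $0$ and $2\pi$; the correct $m$ is the \emph{angle-weighted} average of the neighbouring winding numbers (this is the generalized winding number the paper uses, and it is what makes $m$ integral at cusps but genuinely half-integral on smooth arcs). Second, the perturbation route does not literally pass to a limit: $n$ is integer-valued and jumps across $e$, so at best one recovers the one-sided values $n_\pm=2-2m_\pm$ and must then prove separately that the actual $n$ at $O$ equals their angle-weighted average. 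Third, and most seriously, the deferred local analysis at a cusp cannot be carried out by a normal-form argument under the stated hypotheses: with $\partial K$ only $C^3$, your $F$ is only $C^2$, and at a cusp one has $F=F'=F''=0$ with no further Taylor information available. The paper circumvents this by showing that $p'$ is \emph{admissible} (meaning $p''/p'\to\mp\infty$ at each zero of $p'$ from the left/right) via the integral representations $p'(\varphi)=\int_0^\varphi\sin(\varphi-t)\rho'(t)\,\mathrm dt$ and $p''(\varphi)=\int_0^\varphi\cos(\varphi-t)\rho'(t)\,\mathrm dt$, which need only that $\rho'$ is continuous with isolated zeros. Admissibility is precisely what keeps both the zero-counting integral for $n$ and the winding integral for $m$ well-defined and comparable in the degenerate case.
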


The strategy of the proof is to identify the horizontal equilibria as zeros of the first derivative of a \emph{support function} that parametrizes $\partial K$ and using the zero-counting integral developed in~\cite{zeros} in order to count its zeros. The resulting integral can then be related to the generalized winding number (see~\cite{residue}) of the evolute of $\partial K$.

In Section~\ref{sectionoblique} we replace the horizontal supporting line of the body $K$ by an inclined line with inclination angle $\alpha\in(-\frac\pi2,\frac\pi2)$. It is interesting that for any angle $\alpha\ne 0$, there exist homogeneous bodies $K$ such that the inequality $n\geq 4$ fails. In fact, for every $\alpha\ne 0$, there are such bodies with exactly one metastable equilibrium and also bodies with exactly one stable and one unstable equilibrium with respect to the centroid (see Proposition~\ref{obliqueprop} below). Furthermore, a formula like~\eqref{main-formula} holds true for $\alpha\ne 0$, where $m$ is the winding number of the evolute of a suitable modification of $\partial K$.

\section{Support Functions}
For $x,y\in\R^2$, let $(x,y) = \left\{tx + (1-t)y, t\in(0,1)\right\}$ denote the line segment between the points $x$ and $y$. A set $K\subset\R^2$ is called \emph{convex} if for any $x,y\in K$ it holds that $(x,y)\cap K=(x,y)$. The set $K$ is called \emph{strictly convex} if $(x,y)\cap\mathring K=(x,y)$ for any $x,y\in K$. A bounded convex set $K\subset \R^2$ with $C^n$-boundary, $n\geq2$ is called \emph{strongly convex} if $\partial K$ can be parametrized by a curve $\gamma:S^1\to\partial K$ such that $\|\dot \gamma\|=1$ and $\ddot\gamma$ does not vanish. We will use the identification $S^1\cong \mathbb R/2\pi\mathbb Z$ and the notation $u(\varphi)=(\cos (\varphi),\sin(\varphi))^\top$ throughout this article.

The boundary $\partial K$ of a strictly convex compact set $K$ admits a parametrization by \emph{support functions} $p$ and $q$, i.e., there exists a parametrization $z:S^1\to \partial K$ such that $z(\varphi) = p(\varphi)u(\varphi)+q(\varphi)u'(\varphi)$ (see~\cite{euler1778}), as indicated in Figure~\ref{fig-support}: Here
$S$ is a reference point and $\ell$ a ray emanating in $S$ from which we measure angles.

\begin{figure}[h!]
\begin{center}
\definecolor{blue}{rgb}{0.,0.,1.}
\definecolor{red}{rgb}{1.,0.,0.}
\definecolor{ccwwqq}{rgb}{0.8,0.4,0.}
\begin{tikzpicture}[line cap=round,line join=round,x=0.8cm,y=0.8cm]
\clip(-4.16,-3.46) rectangle (5.74,3.46);
\draw [->,shift={(1.9591469216489055,-1.6439422743241814)}] (0,0) -- (0.:0.9) arc (0.:55.26533603178533:0.9);
\draw [shift={(3.8256876429761633,1.0482033910159714)}] (0,0) -- (-124.73466396821468:0.4) arc (-124.73466396821468:-34.73466396821468:0.4);
\draw[color=ccwwqq,fill=ccwwqq,fill opacity=0.25, smooth,samples=100,domain=0.0:6.283185307179586] plot ({3.8*cos(deg(\x))-0.3*sin(3.0*deg(\x))},{2.8*sin(deg(\x))+0.2*cos(2.0*deg(\x))});
\draw [domain=-4.16:5.74] plot(\x,{(-7.786461130023362--1.4588134273615847*\x)/-2.1040731660110015});
\draw [line width=.8pt,domain=1.9591469216489055:5.740000000000002] plot(\x,{(-5.360654225861506-0.*\x)/3.2608530783510945});
\fill(3.867618685708059,0.816675618481608) circle (0.02);
\draw [line width=1pt,color=red] (3.8256876429761633,1.0482033910159714)-- (1.9591469216489055,-1.6439422743241814);
\draw [line width=1pt,color=blue] (3.8256876429761633,1.0482033910159714)-- (2.3485560108715426,2.072340523556868);
\draw [-{>[scale=2,
          length=2,
          width=3]},color=darkgreen] (1.9591469216489055-.016,-1.6439422743241814+.016) -- (2.891009427511566-.016,-0.29990001626549634+.016);
\draw [-{>[scale=2,
          length=2,
          width=3]},color=orange] (1.9591469216489055,-1.6439422743241814) -- (.615,-.712);
\begin{footnotesize}
\draw [fill=black] (2.3485560108715426,2.072340523556868) circle (1.0pt);
\draw[color=black] (2.46,2.36) node {$Z$};
\draw[color=black] (5.58,-1.89) node {$\ell$};
\draw [fill=black] (3.8256876429761633,1.0482033910159714) circle (1pt);
\draw [fill=black] (1.9591469216489055,-1.6439422743241814) circle (1.0pt);
\draw[color=black] (1.7,-1.79) node {$S$};
\draw[color=black] (2.5,-1.4) node {$\varphi$};
\draw[color=red] (2.5,0.01) node {$p(\varphi)$};
\draw[color=blue] (3.5,1.9) node {$q(\varphi)$};
\draw[color=black] (2,-0.71) node {$\textcolor{darkgreen}{u(\varphi)}$};
\draw (.515,-1.2) node {$\textcolor{orange}{u'(\varphi)}$};
\draw (-1,1.2) node {$\textcolor{ccwwqq}{K}$};
\draw (-3,-2) node {$\textcolor{ccwwqq}{\partial K}$};
\end{footnotesize}
\end{tikzpicture}
\caption{The support functions $p$ and $q$ of a strictly convex compact set $K$.}\label{fig-support}
\end{center}
\end{figure}
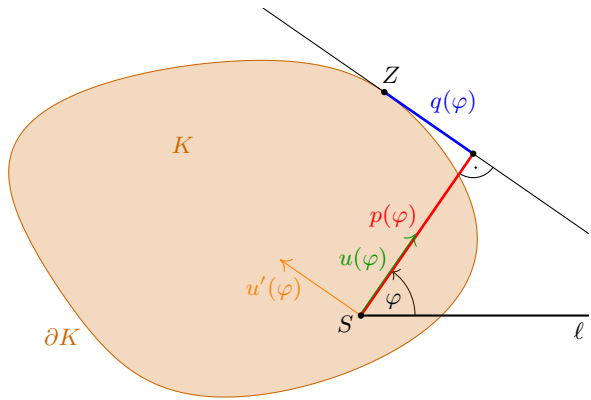
In fact, for fixed $\varphi$, the orthogonal projection of $K$ to the line $g=\{\lambda u(\varphi)\mid \lambda\in\mathbb R\}$
is a compact interval (see Figure~\ref{fig-support2}), and for its endpoint $P$ we have
$$
P=pu(\varphi)\text{ with $p=\max\{ \langle X,u(\varphi)\rangle\mid X\in K \}$}.
$$
Since $K$ is strictly convex, $p=\langle Z,u(\varphi)\rangle$ for a unique $Z\in K$. Hence, by choosing $p(\varphi)=p$, and
$q(\varphi)$ as the oriented distance of $Z$ and $P$ we have indeed
$$
Z=z(\varphi)=p(\varphi)u(\varphi)+q(\varphi)u'(\varphi).
$$
\begin{figure}[h!]
\begin{center}
\definecolor{ccwwqq}{rgb}{0.8,0.4,0.}
\begin{tikzpicture}[line cap=round,line join=round,x=.8cm,y=.8cm]
\clip(-4.16,-3.46) rectangle (5.74,3.46);
\draw [->,shift={(1.9591469216489055,-1.6439422743241814)}] (0,0) -- (0.:0.9) arc (0.:55.26533603178533:0.9);
\draw [shift={(3.8256876429761633,1.0482033910159714)}] (0,0) -- (-124.73466396821468:0.4) arc (-124.73466396821468:-34.73466396821468:0.4);
\draw[color=ccwwqq,fill=ccwwqq,fill opacity=0.25, smooth,samples=100,domain=0.0:6.283185307179586] plot ({3.8*cos(deg(\x))-0.3*sin(3.0*deg(\x))},{2.8*sin(deg(\x))+0.2*cos(2.0*deg(\x))});
\draw [domain=-4.16:5.74] plot(\x,{(-7.786461130023362--1.4588134273615847*\x)/-2.1040731660110015});
\draw [line width=.8pt,domain=1.9591469216489055:5.740000000000002] plot(\x,{(-5.360654225861506-0.*\x)/3.2608530783510945});
\fill(3.867618685708059,0.816675618481608) circle (0.02);
\fill[line width=.8pt] (3.867618685708059,0.816675618481608) circle (0.02);
\draw [-{>[scale=2,
          length=2,
          width=3]},color=darkgreen] (1.9591469216489055-.016,-1.6439422743241814+.016) -- (2.891009427511566-.016,-0.29990001626549634+.016);
\draw [-{>[scale=2,
          length=2,
          width=3]},color=orange] (1.9591469216489055,-1.6439422743241814) -- (.615,-.712);
\draw [domain=-4.16:5.74] plot(\x,{(--8.342794091419034-2.6921456653401528*\x)/-1.8665407213272578});
\begin{scriptsize}
\draw [fill=black] (2.3485560108715426,2.072340523556868) circle (1.0pt);
\draw[color=black] (2.46,2.3) node {$Z$};
\draw [fill=black] (3.8256876429761633,1.0482033910159714) circle (1.0pt);
\draw[color=black] (3.82,1.37) node {$P$};
\draw [fill=black] (1.9591469216489055,-1.6439422743241814) circle (1.0pt);
\draw[color=black] (2.5,-1.4) node {$\varphi$};
\draw[color=black] (2,-0.71) node {$\textcolor{darkgreen}{u(\varphi)}$};
\draw (.515,-1.2) node {$\textcolor{orange}{u'(\varphi)}$};
\draw[color=black] (5.02,2.37) node {$g$};
\draw (-1,1.2) node {$\textcolor{ccwwqq}{K}$};
\draw (-3,-2) node {$\textcolor{ccwwqq}{\partial K}$};
\draw[color=black] (5.58,-1.89) node {$\ell$};
\draw[color=black] (1.7,-1.75) node {$S$};
\end{scriptsize}
\end{tikzpicture}
\caption{Existence and uniqueness of the support functions.}\label{fig-support2}
\end{center}
\end{figure}
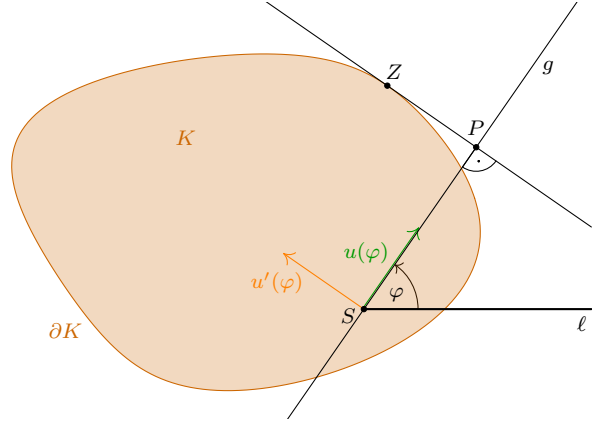The connection between the regularity of the boundary curve $\partial K$ and the support 
functions is described in the following Lemma. Note that here we need that $K$ is strongly convex.
\begin{lemma}\label{regularity}
Let $K$ be a strongly convex compact set with $C^n$ boundary $\partial K$, $n\geq 2$. Then $\partial K$ can be parametrized by 
$\varphi\mapsto z(\varphi)=p(\varphi)u(\varphi)+p'(\varphi)u'(\varphi)$, where $p\in C^n(S^1,\R^2)$.
\end{lemma}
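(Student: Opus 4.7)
The plan is to obtain the identity $q=p'$ from a direct differentiation of the ansatz $z=pu+qu'$, and then to establish $C^n$-regularity of $p$ via a brief bootstrap. Geometrically, $q=p'$ just expresses that $z'(\varphi)$ must be perpendicular to $u(\varphi)$, since $u(\varphi)$ is the outward unit normal of $\partial K$ at $z(\varphi)$ by the very construction of $p$.

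To carry out the first step, I would differentiate the ansatz and use $u''(\varphi)=-u(\varphi)$ to obtain
\[ z'(\varphi) = \bigl(p'(\varphi)-q(\varphi)\bigr)\,u(\varphi) + \bigl(p(\varphi)+q'(\varphi)\bigr)\,u'(\varphi). \]
Because $p(\varphi)=\max_{X\in K}\langle X,u(\varphi)\rangle$, the unit vector $u(\varphi)$ is the outward normal at $z(\varphi)$; consequently $z'(\varphi)$ is a scalar multiple of $u'(\varphi)$, which forces the $u(\varphi)$-coefficient above to vanish. This yields $q=p'$.

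For the regularity claim, I would start from an arc-length parametrization $\gamma:S^1\to\partial K$, which is $C^n$ by hypothesis. Writing $\dot\gamma(s)=u(\psi(s))$ defines a (continuous, globally well-defined) tangent angle $\psi$; since $\dot\psi$ equals the signed curvature $\kappa$ and strong convexity provides $\ddot\gamma\neq 0$, we have $\kappa>0$ throughout $S^1$. Thus $\psi\in C^{n-1}(S^1,S^1)$ has strictly positive derivative, and hence is a $C^{n-1}$-diffeomorphism by the inverse function theorem. Reparametrizing $\gamma$ by the normal angle $\varphi$ (which differs from $\psi$ by the constant $-\pi/2$) then gives $z\in C^{n-1}(S^1,\R^2)$. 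I expect this to be the main technical step: promoting the pointwise nondegeneracy $\ddot\gamma\neq 0$ into a global $C^{n-1}$-inverse of the Gauss map is where strong convexity is genuinely used (in contrast to mere strict convexity, which only guarantees a continuous Gauss-map inverse).

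The bootstrap then finishes the proof. From $p(\varphi)=\langle z(\varphi),u(\varphi)\rangle$ we read off $p\in C^{n-1}$, and from $q(\varphi)=\langle z(\varphi),u'(\varphi)\rangle\in C^{n-1}$ together with the already-established identity $p'=q$ we conclude $p\in C^n$, as claimed.
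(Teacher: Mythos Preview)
Your proof is correct and follows the same overall strategy as the paper's: both arguments use strong convexity to show that the normal-angle map $\varphi\leftrightarrow s$ is a $C^{n-1}$ diffeomorphism, deduce $p,q\in C^{n-1}$ as functions of $\varphi$, and then bootstrap to $p\in C^n$ via the identity $p'=q$. Your derivation of $p'=q$ from the geometric observation $z'\perp u$ is a bit cleaner than the paper's explicit chain-rule computation in arc length (which ultimately encodes the same fact through $J\ddot\gamma\parallel\dot\gamma$), but the two proofs are otherwise essentially identical.
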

This result is remarkable in that $p$ as a function of arc length $s$ along $\partial K$ instead of $\varphi$ 
is only in $C^{n-1}$ in general.
\begin{proof}
Let $\gamma$ be a $C^n$ arc-length parametrization of $\partial K$ and let $J=\left(\begin{smallmatrix}0&-1\\1&0\end{smallmatrix}\right)$. Observe that $\{-J\dot \gamma(s),\dot\gamma(s)\}$ forms an orthonormal basis of $\R^2$ for every $s$, where
the dot indicates the derivative with respect to arc length. Hence we may write 
\begin{equation}\label{eq-s}
\gamma(s)=-\textcolor{red}{p(s)}J\dot\gamma(s)+\textcolor{blue}{q(s)}\dot\gamma(s),
\end{equation} 
where
\begin{alignat*}{2}
\textcolor{red}{p(s)}&=-\langle\gamma(s),J\dot\gamma(s)\rangle&&\in C^{n-1}\\
\textcolor{blue}{q(s)}&=\textcolor{white}{-}\langle\gamma(s),\dot\gamma(s)\rangle&&\in C^{n-1}.
\end{alignat*}
See Figure~\ref{fig-regularity}.
\begin{figure}[h!]
\begin{center}
\definecolor{blue}{rgb}{0.,0.,1.}
\definecolor{red}{rgb}{1.,0.,0.}
\definecolor{ccwwqq}{rgb}{0.8,0.4,0.}
\begin{tikzpicture}[line cap=round,line join=round,x=.8cm,y=.8cm]
\clip(-4.16,-3.46) rectangle (5.74,3.46);
\draw [shift={(1.9591469216489055,-1.6439422743241814)}] (0,0) -- (0.:1.2) arc (0.:55.26533603178533:1.2) -- cycle;
\draw [shift={(3.8256876429761633,1.0482033910159714)}] (0,0) -- (-124.73466396821468:0.4) arc (-124.73466396821468:-34.73466396821468:0.4) -- cycle;
\draw[color=ccwwqq,fill=ccwwqq,fill opacity=0.25, smooth,samples=100,domain=0.0:6.283185307179586] plot ({3.8*cos(deg(\x))-0.3*sin(3.0*deg(\x))},{2.8*sin(deg(\x))+0.2*cos(2.0*deg(\x))});
\draw [domain=-4.16:5.74] plot(\x,{(-7.786461130023362--1.4588134273615847*\x)/-2.1040731660110015});
\draw [line width=1pt,domain=1.9591469216489055:5.740000000000002] plot(\x,{(-5.360654225861506-0.*\x)/3.2608530783510945});
\fill(3.867618685708059,0.816675618481608) circle (0.02);
\draw [line width=1pt,color=red] (3.8256876429761633,1.0482033910159714)-- (1.9591469216489055,-1.6439422743241814);
\draw [line width=1pt,color=blue] (3.8256876429761633,1.0482033910159714)-- (2.3485560108715426,2.072340523556868);
\draw [-{>[scale=2.5,
          length=2,
          width=3]},color=darkgreen] (1.9591469216489055,-1.6439422743241814) -- (2.891009427511566,-0.2999000162654961);
\draw [->,line width=.75pt] (2.3485560108715426,2.072340523556868) -- (1.4166935050088822,0.7282982654981829);
\draw [->,line width=.75pt] (2.3485560108715426,2.072340523556868) -- (1.0045137528128574,3.0042030294195285);
\begin{footnotesize}
\draw [fill=black] (2.3485560108715426,2.072340523556868) circle (1.0pt);
\draw[color=black] (2.8,2.27) node {$\gamma(s)$};
\draw [fill=black] (3.8256876429761633,1.0482033910159714) circle (1pt);
\draw [fill=black] (1.9591469216489055,-1.6439422743241814) circle (1.0pt);
\draw[color=black] (2.66,-1.33) node {$\varphi(s)$};
\draw[color=red] (2.6,0.1) node {$p(s)$};
\draw[color=blue] (3.42,1.81) node {$q(s)$};
\draw[color=black] (1.94,-0.91) node {\textcolor{darkgreen}{$u(s)$}};
\draw[color=black] (1.26,1.47) node {$J\dot\gamma(s)$};
\draw[color=black] (2.12,2.7) node {$\dot\gamma(s)$};
\draw (-1,1.2) node {$\textcolor{ccwwqq}{K}$};
\draw[color=black] (5.58,-1.89) node {$\ell$};
\draw[color=black] (1.7,-1.75) node {$S$};
\end{footnotesize}
\end{tikzpicture}
\caption{Parametrisation by arc length.}\label{fig-regularity}
\end{center}
\end{figure}
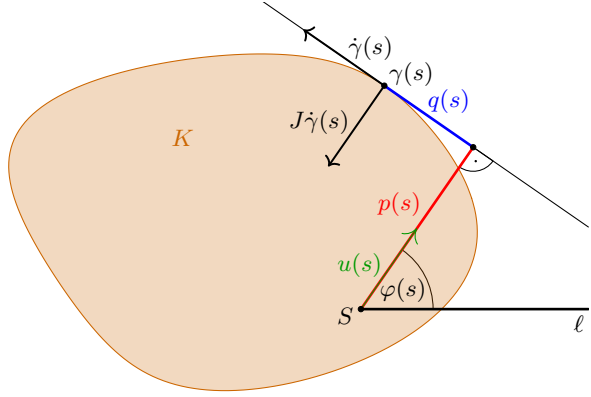
It holds that $\varphi(s) = \arg \dot \gamma(s) - \frac\pi 2=-\arctan\left(\frac{\dot\gamma_1(s)}{\dot\gamma_2(s)}\right)$ is of class $C^{n-1}$. Hence $\varphi\mapsto q(s(\varphi))$ is of class $C^{n-1}$ and
\begin{equation}\label{phi}
\frac{\mathrm d\varphi}{\mathrm ds} = - \frac{1}{1+\left(\frac{\dot \gamma_1}{\dot\gamma_2}\right)^2} \cdot \frac{\ddot\gamma_1\dot\gamma_2-\dot\gamma_1\ddot\gamma_2}{\dot \gamma_2^2} = -\frac{\ddot\gamma_1\dot\gamma_2-\dot\gamma_1\ddot\gamma_2}{\dot\gamma_1^2+\dot\gamma_2^2} = -\langle \dot \gamma,J\ddot\gamma\rangle.
\end{equation}
We now show that the derivative $(p\circ s)'(\varphi) = (q\circ s)(\varphi)$ which implies that $\varphi \mapsto p(s(\varphi))$ is of class $C^n$: Indeed we have
\begin{eqnarray*}
(p\circ s)'(\varphi) & = & -\frac{\mathrm d}{\mathrm d\varphi}\Big\langle\gamma(s(\varphi)),J\dot\gamma(s(\varphi))\Big\rangle \\
& = & -\Big\langle \dot\gamma(s(\varphi)),J\dot\gamma(s(\varphi))\Big\rangle \frac{\mathrm ds}{\mathrm d\varphi} - \Big\langle \gamma(s(\varphi)),J\ddot\gamma(s(\varphi))\Big\rangle \frac{\mathrm ds}{\mathrm d\varphi}\\
& \stackrel{\eqref{phi}}{=}& \frac{\langle \gamma(s(\varphi)),J\ddot\gamma(s(\varphi))\rangle}{\langle\dot\gamma(s(\varphi)),J\ddot\gamma(s(\varphi))\rangle} \\ & = & \Big\langle \gamma(s(\varphi)),\dot\gamma(s(\varphi))\Big\rangle = (q\circ s)(\varphi),
\end{eqnarray*}
where we have used $J\ddot\gamma \parallel \dot\gamma$ in the last line.
\end{proof}
\begin{coro}
Let $K$ be a strongly convex compact set with $C^n$ boundary $\partial K$, $n\geq 2$. If $\partial K$ 
is parametrized by $z(\varphi) = p(\varphi)u(\varphi) + p'(\varphi)u'(\varphi)$, then $z'(\varphi)=u'(\varphi)\rho(\varphi)$, where $\rho(\varphi) = p(\varphi) + p''(\varphi)$ is
the radius of curvature of $\partial K$ in $z(\varphi)$.
\end{coro}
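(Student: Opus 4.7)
The proof is essentially a one-line computation plus the identification of the result with the radius of curvature. My plan is to first differentiate $z(\varphi)=p(\varphi)u(\varphi)+p'(\varphi)u'(\varphi)$ directly and then interpret the geometric meaning of the coefficient that appears.

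For the first step, I would use the identities $u''(\varphi)=-u(\varphi)$ (which follows from $u(\varphi)=(\cos\varphi,\sin\varphi)^\top$) and apply the product rule to each summand. This gives
$$
z'(\varphi)=p'(\varphi)u(\varphi)+p(\varphi)u'(\varphi)+p''(\varphi)u'(\varphi)+p'(\varphi)u''(\varphi),
$$
and the two terms proportional to $u(\varphi)$ cancel because $u''=-u$, leaving $z'(\varphi)=(p(\varphi)+p''(\varphi))u'(\varphi)=\rho(\varphi)u'(\varphi)$ as claimed. The regularity needed to do this is guaranteed by Lemma~\ref{regularity}, which ensures $p\in C^n$ with $n\geq 2$.

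For the second step, I need to identify $\rho(\varphi)=p(\varphi)+p''(\varphi)$ as the radius of curvature. The construction of the support function makes $u(\varphi)$ the outward unit normal at $z(\varphi)$, so $u'(\varphi)$ is the unit tangent in the direction of positive orientation. Since $z'(\varphi)=\rho(\varphi)u'(\varphi)$ and $u'(\varphi)$ has unit length, the arc-length element is $ds/d\varphi=\rho(\varphi)$ (which is positive by strong convexity, compare with \eqref{phi} in the proof of Lemma~\ref{regularity}). Consequently the unit tangent $T$ to $\partial K$ satisfies $T(\varphi)=u'(\varphi)$, and
$$
\frac{\mathrm{d}T}{\mathrm{d}s}=\frac{u''(\varphi)}{\rho(\varphi)}=-\frac{u(\varphi)}{\rho(\varphi)},
$$
so that the curvature of $\partial K$ equals $1/\rho(\varphi)$, making $\rho(\varphi)$ the radius of curvature at $z(\varphi)$.

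There is no real obstacle here: the computation is routine and the only conceptual point is that the parametrization by $\varphi$ is precisely by the normal angle, which is what turns the standard Frenet relation $d\varphi/ds=\kappa$ into the clean identity $\rho=p+p''$. I would present this as a short explicit calculation rather than invoking the Frenet apparatus, since the cancellation $u''=-u$ already does all the work.
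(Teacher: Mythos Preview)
Your proof is correct. Both you and the paper obtain $z'=(p+p'')u'$ by the same direct differentiation using $u''=-u$; the approaches diverge only in how $p+p''$ is identified with the radius of curvature.

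The paper splits into two cases. For $n\geq 3$ it computes $z''=(p+p'')u''+(p'+p''')u'$ and reads off $\rho$ as the component of $z''$ along $u''$; this step requires $p\in C^3$. For $n=2$ it cannot differentiate again, so it returns to the arc-length parametrization $\gamma$, uses $\rho\ddot\gamma=J\dot\gamma$, and extracts $p+p''=\rho$ from the linear system obtained by differentiating $\gamma=-pJ\dot\gamma+q\dot\gamma$.

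Your argument avoids this case split: since the unit tangent $T=u'$ is smooth independently of $p$, and $ds/d\varphi=p+p''$ is continuous already for $n=2$, the relation $dT/ds=u''/(p+p'')=-u/(p+p'')$ is available for every $n\geq2$. The observation you exploit---that in this parametrization $\varphi$ is the normal angle, so the tangent direction is determined by $\varphi$ alone and not by $p$---is precisely what makes a uniform treatment possible, and it is cleaner than the paper's handling of the borderline case $n=2$.
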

{\em Proof.}
It follows from Lemma \ref{regularity} that $p$ is of class $C^n$. 
First, by direct calculation, we see that $z'=(p+p'')u'$, because $u''=-u$.
If $n\geq 3$, we compute $z'' = (p + p'')u'' + (p'+p''')u'$. 
Since the radius of curvature $\rho$ is the projection of $z''$ onto $u''$ we obtain the desired result. 
If $n=2$ we consider again a parametrization $\gamma$ of $\partial K$ by arc length 
and use $\rho \ddot \gamma = J\dot \gamma$ and $J^2=-\operatorname{id}$ to compute by~(\ref{eq-s})
$$\begin{aligned}
\dot \gamma & = -pJ\ddot \gamma  - \dot pJ\dot\gamma  + q\ddot \gamma  + \dot q\dot\gamma \\
& = \frac{p}{\rho}\dot\gamma  - \dot pJ\dot\gamma + \frac{q}{\rho}J\dot\gamma  + \dot q\dot\gamma\\
& = \left(\frac{p}{\rho}+\dot q\right)\dot \gamma + \left(\frac{q}{\rho}-\dot p\right)J\dot\gamma.
\end{aligned}$$
Hence we have $\frac{p}{\rho}+\dot q\equiv1$ and $\frac{q}{\rho}-\dot p\equiv0$. Using $\dot q = 1 - \frac{p}{\rho}$ we find
\makeatletter
\renewcommand\tagform@[1]{\maketag@@@{\ignorespaces#1\unskip\@@italiccorr}}
\makeatother
\begin{equation}
\begin{aligned}
p + p''  & = p + q' = p + \dot q \cdot \frac{\mathrm ds}{\mathrm d\varphi} = p+\frac{1-\frac{p}{\rho}}{-\langle\dot\gamma,J\ddot\gamma\rangle} = p+\frac{1-\frac{p}{\rho}}{\frac{1}{\rho}\langle\dot\gamma,\dot\gamma\rangle} = \rho. 
\end{aligned}\tag{$\Box$}
\end{equation}
\makeatletter
\renewcommand\tagform@[1]{\maketag@@@{\ignorespaces(#1)\unskip\@@italiccorr}}
\makeatother

We will now collect a few expressions for relevant geometric quantities in terms of the parametrization for $\partial K$ 
from Lemma~\ref{regularity}: First of all the arc length $s(\varphi)$ of $z|_{[0,\varphi]}$ is given by
\begin{equation}\label{eq-arc}
s(\varphi) = \int_0^\varphi |z'|\,\mathrm d\phi = \int_0^\varphi (p+p'')\,\mathrm d\phi = \int_0^\varphi p\,\mathrm d\phi + p'(\varphi)-p'(0)
\end{equation}
and hence the perimeter of $K$ is
$$s(2\pi) = \int_0^{2\pi} p\,\mathrm d\varphi =:L.$$
The center of mass of the curve $\partial K$ is given by
$$
\frac{1}{L}\int_0^{2\pi} z |z'|\,\mathrm d\varphi = \frac{1}{L}\int_0^{2\pi}(pu+p'u')(p+p'')\,\mathrm d\varphi = \frac{1}{L}\int_0^{2\pi}\left(p^2-\frac{p'^2}{2}\right)u\,\mathrm d\varphi,
$$
where we have integrated by parts. Similarly, the area $A$ of $K$ is given by
$$
A=\frac{1}{2}\int_0^{2\pi}(p^2-p'^2)\,\mathrm d\varphi,
$$
and the centroid $O$ of $K$ by
$$
O=\frac{1}{3A}\int_0^{2\pi}(pu+p'u')p(p+p'')\,\mathrm d\varphi.
$$
\section{The evolute of \boldmath{$\partial K$}}
Let $K$ be a strongly convex set of class $C^2$. Then,
the evolute of $\partial K$  is given by
\begin{eqnarray}
e(\varphi)&=&z(\varphi)-\rho(\varphi)u(\varphi)\notag\\
&=& z(\varphi) + (p(\varphi)+p''(\varphi))u''(\varphi)\notag \\
&=& p(\varphi)u(\varphi) + p'(\varphi)u'(\varphi)-(p(\varphi)+p''(\varphi))u(\varphi) \notag\\
&=& p'(\varphi)u'(\varphi)-p''(\varphi)u(\varphi).\label{eq-evolute}
\end{eqnarray} 
Thus, the evolute is obtained from the original curve $\partial K$ by 
replacing its support function $p$ by $p'$ and a rotation about $90^\circ$.
Formula~(\ref{eq-evolute}) shows that all \emph{parallel curves} of $\partial K$, which have support function 
$p + \text{constant}$, have the same evolute as $K$.

\subsection{Curves of Constant Width}\label{const-width}
Suppose that $K$ is a strongly convex set with $C^2$-boundary $\partial K$ and assume in addition that $\partial K$ is a curve of constant width 
$d>0$. Then $\partial K$ can be parametrized by a support function $p$ that satisfies $p(\varphi)+p(\varphi + \pi)\equiv d$. This equation implies that $p^{(k)}(\varphi)=-p^{(k)}(\varphi+\pi)$ for $k=1,2$ and it follows for the evolute $e$ of $\partial K$
$$
\begin{aligned}
e(\varphi+\pi) & = p'(\varphi+\pi)\overbrace{u'(\varphi+\pi)}^{=-u'(\varphi)}-p''(\varphi+\pi)\overbrace{u(\varphi+\pi)}^{=-u(\varphi)}\\
& = p'(\varphi)u'(\varphi)-p''(\varphi)u(\varphi)\\
& = e(\varphi).
\end{aligned}
$$
Hence $e:S^1\to\R^2$ is $\pi$-periodic. This means that the evolute 
of a curve of constant width is traversed twice.
\subsection{Cusps of the evolute}
Even if $\partial K$ is a smooth regular curve, its evolute
has necessarily at least four singular points (cusps). The situation is described in the following lemma:
\begin{lemma}\label{cuspsofevolute}
Let $K$ be strongly convex and compact with $\partial K$ of class $C^3$
parametrized by $\varphi\mapsto z(\varphi)=p(\varphi)u(\varphi)+p'(\varphi)u'(\varphi)$. We assume, that
the curvature of $\partial K$ has only finitely many stationary points.
Then, the evolute of $\partial K$, given by $\varphi\mapsto e(\varphi)=p'(\varphi)u'(\varphi)-p''(\varphi)u(\varphi)$,
is regular and of class $C^2$ except for points where the radius of curvature $\rho$ of $\partial K$ is stationary. 
More precisely: 
\begin{itemize}
\item If $\rho$ has a local minimum in $\varphi_0$, then $e$ has a cusp in $\varphi_0$
pointing towards the point $z(\varphi_0)$ (see Figure~\ref{fig-evolute}).
\item If $\rho$ has a local maximum in $\varphi_0$, then $e$ has a cusp in $\varphi_0$
pointing away from  the point $z(\varphi_0)$  (see Figure~\ref{fig-evolute}).
\item If $\rho$ has a saddle point in $\varphi_0$, then $e$ is $C^1$ in $\varphi_0$.
\end{itemize}
\end{lemma}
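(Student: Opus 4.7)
The key is to differentiate $e = p'u' - p''u$ using $u'' = -u$. A direct calculation gives
\[
e'(\varphi) = p''u' + p'u'' - p'''u - p''u' = -(p' + p''')u = -\rho'(\varphi)\, u(\varphi),
\]
since $\rho = p + p''$. As $|u|=1$, the evolute is regular at $\varphi$ exactly when $\rho'(\varphi) \neq 0$; on any interval where $\rho'$ has constant sign, the smoothness of $u$ provides a regular $C^2$ reparametrization of the image of $e$.

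To analyse the stationary points $\varphi_0$ of $\rho$, I would differentiate twice more, using $\rho'(\varphi_0)=0$ and $u''=-u$, to obtain
\[
e''(\varphi_0) = -\rho''(\varphi_0)\, u(\varphi_0), \qquad e'''(\varphi_0) = -\rho'''(\varphi_0)\, u(\varphi_0) - 2\rho''(\varphi_0)\, u'(\varphi_0).
\]
The direction from $e(\varphi_0)$ towards $z(\varphi_0)$ is $+u(\varphi_0)$, because
\[
z(\varphi_0) - e(\varphi_0) = (p+p'')(\varphi_0)\, u(\varphi_0) = \rho(\varphi_0)\, u(\varphi_0),
\]
and $\rho(\varphi_0)>0$ by strong convexity. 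The plan is now to Taylor-expand $e$ at $\varphi_0$ in the orthonormal frame $\{u(\varphi_0), u'(\varphi_0)\}$ and read off the cusp type from the leading coefficients.

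In the nondegenerate extremum case ($\rho''(\varphi_0)\neq 0$) the expansion reads
\[
e(\varphi_0+h) - e(\varphi_0) = -\tfrac{h^2}{2}\rho''(\varphi_0)\, u(\varphi_0) - \tfrac{h^3}{3}\rho''(\varphi_0)\, u'(\varphi_0) + O(h^4),
\]
the canonical $(t^2,t^3)$-cusp. The $u(\varphi_0)$-component has sign opposite to $\rho''(\varphi_0)$, so the curve lies locally on that side of $e(\varphi_0)$ and the cusp tip points in the opposite direction: along $+u(\varphi_0)$ (towards $z(\varphi_0)$) when $\rho''>0$, and along $-u(\varphi_0)$ (away from $z(\varphi_0)$) when $\rho''<0$. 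Degenerate extrema, where the first nonvanishing derivative of $\rho$ after $\rho'$ is of even order $2k$, yield $(t^{2k},t^{2k+1})$-cusps by the same Taylor argument, with the same sign pattern. At a saddle, by definition the first non-vanishing derivative is of odd order $2k+1\geq 3$, so the leading Taylor term of $e$ is of the form $c\,h^{2k+1}\,u(\varphi_0)$ with $c\neq 0$; this is odd in $h$, so the two branches of $e$ at $\varphi_0$ fit together with well-defined tangent direction $\pm u(\varphi_0)$, no cusp appears, and $e$ is $C^1$ at $\varphi_0$.

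The only subtlety I foresee is the sign convention for the cusp: one must distinguish between the direction in which the cusp opens (parallel to $+e''(\varphi_0)$) and the direction its tip points (opposite, along $-e''(\varphi_0)$). Once this is pinned down by comparison with the model $(t^2,t^3)$-cusp, the three bulleted statements follow immediately from the computed Taylor coefficients together with the identity $z-e=\rho\,u$.
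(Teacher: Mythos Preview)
Your computation $e'=-\rho'\,u$ is the right starting point, and your geometric reading of the cusp direction via $z-e=\rho\,u$ is exactly what is needed. However, the Taylor-expansion programme has a genuine regularity gap. Under the stated hypotheses $\partial K$ is only $C^3$, hence by Lemma~\ref{regularity} the support function $p$ is $C^3$ and $\rho=p+p''$ is merely $C^1$. Thus $\rho''$, $\rho'''$, and the higher derivatives on which your expansion and your ``first nonvanishing derivative of order $2k$ versus $2k+1$'' dichotomy rest need not exist at all. Even if they did, that dichotomy does not exhaust all $C^1$ extrema (think of a flat minimum where every derivative vanishes). So the argument as written does not cover the cases the lemma is actually asserting.

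The paper avoids this by working only with the sign of $\rho'$ on each side of $\varphi_0$, which is all one has at $C^1$ regularity: the assumption of finitely many stationary points guarantees $\rho'\neq 0$ on a punctured neighbourhood, and then the one-sided limits of $e'/\|e'\|=-\operatorname{sgn}(\rho')\,u$ are read off directly from whether $\rho'$ changes sign (extremum) or not (saddle). A second, smaller gap: your sentence ``the smoothness of $u$ provides a regular $C^2$ reparametrization'' does not establish that the image of $e$ is a $C^2$ curve, since the parametrization $\varphi\mapsto e(\varphi)$ itself is only $C^1$. The paper handles this by writing $e$ locally as a graph and computing $d^2x_2/dx_1^2=-1/(\rho'\cos^3\varphi)$, which is continuous precisely because $\rho'$ is. Your approach would go through cleanly under a $C^\infty$ (or sufficiently high $C^k$) hypothesis, but as stated the lemma needs the lower-tech sign argument.
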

{\em Remarks.} \begin{itemize}
\item By the Four-vertex Theorem (see~\cite{four}, \cite{kneser} or~\cite{osserman}), it follows that the evolute of $\partial K$
has at least four cusps. Since maxima and minima alternate, the number of cusps is
always even.
\item Note that the $C^2$-regularity of $e$ is not evident, since the parametrization of $e$
with respect to $\varphi$ is obviously only $C^1$ in general.
\item The connection between the cusps of the evolute and strict local extrema of the
base curve has first been observed by G.\,H. Light~\cite{light}.
\end{itemize}
\begin{proof}
First of all, note that
$$
z' = \rho u' \text{\quad and\quad } e'= -\rho'u
$$
which shows that $\langle z',e'\rangle = 0$. Suppose now, that $\rho'(\varphi_0)=0$.
\begin{itemize}
\item[1.~case:] $\rho$ has a local minimum in $\varphi_0$. Then
$$
\lim_{\varphi\nearrow\varphi_0}\frac{e'(\varphi)}{\Vert e'(\varphi)\Vert}=u(\varphi_0) \text{\quad and\quad }
\lim_{\varphi\searrow\varphi_0}\frac{e'(\varphi)}{\Vert e'(\varphi)\Vert}=-u(\varphi_0).
$$
\item[2.~case:] $\rho$ has a local maximum in $\varphi_0$. Then
$$
\lim_{\varphi\nearrow\varphi_0}\frac{e'(\varphi)}{\Vert e'(\varphi)\Vert}=-u(\varphi_0) \text{\quad and\quad }
\lim_{\varphi\searrow\varphi_0}\frac{e'(\varphi)}{\Vert e'(\varphi)\Vert}=u(\varphi_0).
$$
\item[3.~case:] $\rho$ has a saddle point in $\varphi_0$, i.e., $\rho'$ does not change sign in $\varphi_0$. Then
$\lim_{\varphi\to\varphi_0}\frac{e'(\varphi)}{\Vert e'(\varphi)\Vert}$ exists, and $e$ is $C^1$ in $\varphi_0$.
\end{itemize}
To check the regularity of the evolute, we interpret the curve locally as a graph of a function $x_2(x_1)$
or $x_1(x_2)$. Then, by the chain rule, we have for $x_1(x_2)$
$$
\frac{\mathrm dx_2}{\mathrm dx_1}(\varphi)=\frac{\frac{\mathrm dz_2(\varphi)}{\mathrm d\varphi}}{\frac{\mathrm dz_1(\varphi)}{\mathrm d\varphi}}=\tan(\varphi)
$$
and
$$
\frac{\mathrm d^2x_2}{\mathrm dx_1^2}(\varphi)=\frac{\frac{\mathrm d\tan(\varphi)}{\mathrm d\varphi}}{\frac{\mathrm dz_1(\varphi)}{\mathrm d\varphi}}=-\frac1{\rho'(\varphi)\cos^3(\varphi)}.
$$
The case $x_1(x_2)$ is similar. Since $\rho'$ is $C^0$, we conclude that locally, in points $\varphi$ where $\rho'(\varphi)\neq 0$,
the curve $e$ is $C^2$.
\end{proof}
\begin{coro}
If we count the arc length of the evolute $e$ between two cusps alternating positive and negative, the resulting sum vanishes  (see Figure~\ref{fig-evolute}).
\end{coro}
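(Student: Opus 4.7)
The plan is to use the identity $e'(\varphi)=-\rho'(\varphi)u(\varphi)$ established in the proof of Lemma~\ref{cuspsofevolute}, which gives the arc length element of the evolute as
$$
|e'(\varphi)|\,\mathrm d\varphi = |\rho'(\varphi)|\,\mathrm d\varphi.
$$
Thus the arc length of $e$ between two consecutive cusps $\varphi_i<\varphi_{i+1}$ equals $\int_{\varphi_i}^{\varphi_{i+1}} |\rho'(\varphi)|\,\mathrm d\varphi$, which is exactly the total variation of $\rho$ on $[\varphi_i,\varphi_{i+1}]$.

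Next I would invoke Lemma~\ref{cuspsofevolute} again: cusps of $e$ correspond to the strict local extrema of $\rho$, while saddle points of $\rho$ produce regular $C^1$-points of $e$. Two consecutive cusps must therefore be of opposite type (a local maximum is necessarily followed by a local minimum of $\rho$, since any additional stationary point in between would be a saddle at which $\rho'$ does not change sign). Consequently $\rho$ is monotone on $[\varphi_i,\varphi_{i+1}]$ and
$$
\int_{\varphi_i}^{\varphi_{i+1}} |\rho'(\varphi)|\,\mathrm d\varphi = |\rho(\varphi_{i+1})-\rho(\varphi_i)|.
$$

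The final step is a cyclic telescoping. Labelling the $2k$ cusps in cyclic order $\varphi_1,\ldots,\varphi_{2k}$ and denoting by $\varepsilon_i\in\{+1,-1\}$ the prescribed alternating signs, the signed total arc length is
$$
\sum_{i=1}^{2k}\varepsilon_i\,|\rho(\varphi_{i+1})-\rho(\varphi_i)|,
$$
with indices taken modulo $2k$. Because the monotonicity direction of $\rho$ flips at each cusp, the signs of the differences $\rho(\varphi_{i+1})-\rho(\varphi_i)$ themselves alternate around the cycle; choosing $\varepsilon_i$ to match this alternation (the opposite choice only changes the overall sign) converts the sum into the cyclic telescoping sum $\sum_{i=1}^{2k}(\rho(\varphi_{i+1})-\rho(\varphi_i))$, which vanishes.

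The only mild point to handle carefully is the sign bookkeeping: one must confirm that "alternating signs" in the statement is compatible with the natural alternation of monotonicity between consecutive cusps, but once this is observed the vanishing is immediate from telescoping and no further computation is needed.
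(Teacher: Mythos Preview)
Your proposal is correct and follows essentially the same approach as the paper: both use $e'=-\rho' u$ to identify the arc length between consecutive cusps with $\int|\rho'|\,\mathrm d\varphi$, and both exploit that $\rho'$ changes sign at each cusp so that the alternating sum becomes $\int_0^{2\pi}\rho'\,\mathrm d\varphi=0$. The paper simply compresses your telescoping step into the single observation that the alternating sum equals $\int_0^{2\pi}\rho'\,\mathrm d\varphi$, whereas you spell out the intermediate equality $\int_{\varphi_i}^{\varphi_{i+1}}|\rho'|=|\rho(\varphi_{i+1})-\rho(\varphi_i)|$ and then telescope explicitly.
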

{\em Proof.}
The factor $\rho '$ in $e'= -\rho'u$ changes its sign in every cusp. The length of $e$ is
$$
\int_0^{2\pi}\Vert e'\Vert\, \mathrm d\varphi = \int_0^{2\pi}|\rho'|\, \mathrm d\varphi
$$
and hence the alternating sum of the lengths between cusps equals
\makeatletter
\renewcommand\tagform@[1]{\maketag@@@{\ignorespaces#1\unskip\@@italiccorr}}
\makeatother
\begin{equation}
\int_0^{2\pi}\rho'\, \mathrm d\varphi =0.\tag{$\Box$}
\end{equation}
\makeatletter
\renewcommand\tagform@[1]{\maketag@@@{\ignorespaces(#1)\unskip\@@italiccorr}}
\makeatother
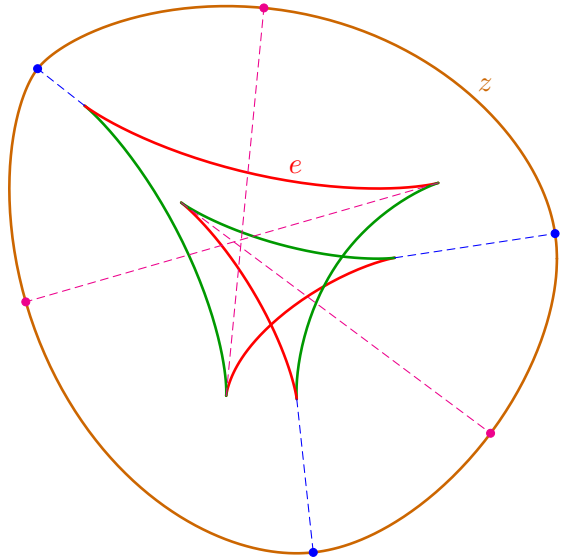
\begin{figure}[h!]
\begin{center}
\definecolor{ccwwqq}{rgb}{0.8,0.4,0.}
\begin{tikzpicture}[line cap=round,line join=round,x=2.8cm,y=2.8cm]
\draw[line width=1pt,color=ccwwqq,smooth,samples=100,domain=0.0:6.283185307179586] plot ({cos(deg(\x))*(1.3 + 1/30*cos(3*deg(\x)) + 1/133*cos(5*deg(\x)) - 1/12*sin(2*deg(\x)) +1/15*sin(3*deg(\x))) -  sin(deg(\x))*(-(1/6)*cos(2*deg(\x)) + 1/5*cos(3*deg(\x)) - 1/10*sin(3*deg(\x)) - 5/133*sin(5*deg(\x)))},{sin(deg(\x))*(1.3 + 1/30*cos(3*deg(\x)) + 1/133*cos(5*deg(\x)) - 1/12*sin(2*deg(\x)) + 1/15*sin(3*deg(\x))) + cos(deg(\x))*(-(1/6)*cos(2*deg(\x)])+ 1/5*cos(3*deg(\x)) - 1/10*sin(3*deg(\x)) -  5/133*sin(5*deg(\x)))});
\draw[line width=1pt,color=red,smooth,samples=20,domain=0.1500085849:1.473877184] plot ({-cos(deg(\x))*(-(3/10)*cos(3*deg(\x)) - 25/133*cos(5*deg(\x)) + 1/3*sin(2*deg(\x)) -3/5*sin(3*deg(\x))) - sin(deg(\x))*(-(1/6)*cos(2*deg(\x)) + 1/5*cos(3*deg(\x)) - 1/10*sin(3*deg(\x)) - 5/133*sin(5*deg(\x)))},{-sin(deg(\x))*(-(3/10)*cos(3*deg(\x)) - 25/133 *cos(5*deg(\x)) + 1/3*sin(2*deg(\x)) - 3/5*sin(3*deg(\x))) + cos(deg(\x))*(-(1/6)*cos(2*deg(\x)) + 1/5*cos(3*deg(\x)) - 1/10*sin(3*deg(\x)) - 5/133*sin(5*deg(\x)))});

\draw[line width=1pt,color=darkgreen,smooth,samples=20,domain=1.473877184:2.474249512] plot ({-cos(deg(\x))*(-(3/10)*cos(3*deg(\x)) - 25/133*cos(5*deg(\x)) + 1/3*sin(2*deg(\x)) -3/5*sin(3*deg(\x))) - sin(deg(\x))*(-(1/6)*cos(2*deg(\x)) + 1/5*cos(3*deg(\x)) - 1/10*sin(3*deg(\x)) - 5/133*sin(5*deg(\x)))},{-sin(deg(\x))*(-(3/10)*cos(3*deg(\x)) - 25/133 *cos(5*deg(\x)) + 1/3*sin(2*deg(\x)) - 3/5*sin(3*deg(\x))) + cos(deg(\x))*(-(1/6)*cos(2*deg(\x)) + 1/5*cos(3*deg(\x)) - 1/10*sin(3*deg(\x)) - 5/133*sin(5*deg(\x)))});

\draw[line width=1pt,color=red,smooth,samples=20,domain=2.474249512:3.422632918] plot ({-cos(deg(\x))*(-(3/10)*cos(3*deg(\x)) - 25/133*cos(5*deg(\x)) + 1/3*sin(2*deg(\x)) -3/5*sin(3*deg(\x))) - sin(deg(\x))*(-(1/6)*cos(2*deg(\x)) + 1/5*cos(3*deg(\x)) - 1/10*sin(3*deg(\x)) - 5/133*sin(5*deg(\x)))},{-sin(deg(\x))*(-(3/10)*cos(3*deg(\x)) - 25/133 *cos(5*deg(\x)) + 1/3*sin(2*deg(\x)) - 3/5*sin(3*deg(\x))) + cos(deg(\x))*(-(1/6)*cos(2*deg(\x)) + 1/5*cos(3*deg(\x)) - 1/10*sin(3*deg(\x)) - 5/133*sin(5*deg(\x)))});

\draw[line width=1pt,color=darkgreen,smooth,samples=20,domain=3.422632918:4.819518211] plot ({-cos(deg(\x))*(-(3/10)*cos(3*deg(\x)) - 25/133*cos(5*deg(\x)) + 1/3*sin(2*deg(\x)) -3/5*sin(3*deg(\x))) - sin(deg(\x))*(-(1/6)*cos(2*deg(\x)) + 1/5*cos(3*deg(\x)) - 1/10*sin(3*deg(\x)) - 5/133*sin(5*deg(\x)))},{-sin(deg(\x))*(-(3/10)*cos(3*deg(\x)) - 25/133 *cos(5*deg(\x)) + 1/3*sin(2*deg(\x)) - 3/5*sin(3*deg(\x))) + cos(deg(\x))*(-(1/6)*cos(2*deg(\x)) + 1/5*cos(3*deg(\x)) - 1/10*sin(3*deg(\x)) - 5/133*sin(5*deg(\x)))});

\draw[line width=1pt,color=red,smooth,samples=20,domain=4.819518211:5.642255832] plot ({-cos(deg(\x))*(-(3/10)*cos(3*deg(\x)) - 25/133*cos(5*deg(\x)) + 1/3*sin(2*deg(\x)) -3/5*sin(3*deg(\x))) - sin(deg(\x))*(-(1/6)*cos(2*deg(\x)) + 1/5*cos(3*deg(\x)) - 1/10*sin(3*deg(\x)) - 5/133*sin(5*deg(\x)))},{-sin(deg(\x))*(-(3/10)*cos(3*deg(\x)) - 25/133 *cos(5*deg(\x)) + 1/3*sin(2*deg(\x)) - 3/5*sin(3*deg(\x))) + cos(deg(\x))*(-(1/6)*cos(2*deg(\x)) + 1/5*cos(3*deg(\x)) - 1/10*sin(3*deg(\x)) - 5/133*sin(5*deg(\x)))});

\draw[line width=1pt,color=darkgreen,smooth,samples=20,domain=5.642255832:0.1500085849+6.28318530717958] plot ({-cos(deg(\x))*(-(3/10)*cos(3*deg(\x)) - 25/133*cos(5*deg(\x)) + 1/3*sin(2*deg(\x)) -3/5*sin(3*deg(\x))) - sin(deg(\x))*(-(1/6)*cos(2*deg(\x)) + 1/5*cos(3*deg(\x)) - 1/10*sin(3*deg(\x)) - 5/133*sin(5*deg(\x)))},{-sin(deg(\x))*(-(3/10)*cos(3*deg(\x)) - 25/133 *cos(5*deg(\x)) + 1/3*sin(2*deg(\x)) - 3/5*sin(3*deg(\x))) + cos(deg(\x))*(-(1/6)*cos(2*deg(\x)) + 1/5*cos(3*deg(\x)) - 1/10*sin(3*deg(\x)) - 5/133*sin(5*deg(\x)))});

\draw [color=blue,fill=blue] (1.33205, 0.152523) circle(1.5pt);
\draw [color=magenta,fill=magenta] (-0.0504768, 1.22466) circle(1.5pt);
\draw [color=blue,fill=blue] (-1.12495,0.935977) circle(1.5pt);
\draw [color=magenta,fill=magenta] (-1.18102, -0.172011) circle(1.5pt);
\draw [color=blue,fill=blue] (0.183859, -1.36243) circle(1.5pt);
\draw [color=magenta,fill=magenta] (1.02573,-0.796301) circle(1.5pt);

\draw [color=blue,dash pattern={on 3pt off 2pt }] (0.5709513316,0.0374874459) -- (1.33205, 0.152523);
\draw [color=magenta,dash pattern={on 3pt off 2pt }] (-0.229640646, -0.618141762) -- (-0.0504768, 1.22466);
\draw [color=blue,dash pattern={on 3pt off 2pt }] (-0.901930125,0.760247446) --  (-1.12495,0.935977);
\draw [color=magenta,dash pattern={on 3pt off 2pt }] (0.77760233,0.393406288) -- (-1.18102, -0.172011);
\draw [color=blue,dash pattern={on 3pt off 2pt }] (0.105488466, -0.63368079) --  (0.183859, -1.36243);
\draw [color=magenta,dash pattern={on 3pt off 2pt }] (-0.44351703, 0.29974050) --(1.02573,-0.796301);

\draw[color=ccwwqq] (1,.86) node {$z$};
\draw[color=red] (.1,.47) node {$e$};

\end{tikzpicture}
\caption{The blue points are maxima of the  curvature of $z$, the magenta points are minima.
The sum of the lengths of the red arcs of the evolute $e$ equals the sum of lengths of the green arcs.}\label{fig-evolute}
\end{center}
\end{figure}
\subsection{Equilibria}
We now choose a measure $\mu$ with support in the compact convex set $K\subset \mathbb R^2$ which models the density of a distribution
of mass. The center of mass of $\mu$
is a point $O\in K$. Vice versa, given a point $O\in K$, there is a measure
supported in $K$ with center of mass $O$ (e.g.~a Dirac mass in $O$). In a 
physical model, this scenario can be realized by fixing a heavy lead ball 
in the point $O$ on a thin, lightweight plate which has shape $K$. 
If the density in $K$ is constant the center of mass is usually called the {\em centroid}.
If we allow signed measures, the center of mass can be any point $O$ in $\mathbb R^2$,
and vice versa, given an arbitrary point $O\in\mathbb R^2$, there is a signed
measure supported in $K$ with center of mass in $O$. A physical model 
can be manufactured be glueing a long, thin batten to $K$ joining $K$ to a
point $O\notin K$ and to fix a heavy lead ball at its far end in $O$.

We are interested in the following question: Suppose $K$ is equipped
with a center of mass $O$, as discussed above, and is rolling
along a horizontal straight line $\ell$. Horizontal means, that $\ell$
is perpendicular to the direction of the gravitational force $g$.
{\em What can we say about the 
number of equilibria with respect to $O$ in terms of the geometry of $\partial K$?
In particular, how many equilibrium positions are there?}

Physically, an equilibrium position is characterized by the fact, that the 
vector $v$ from the center of mass $O$ of $K$ to the contact point of $\partial K$ with the supporting 
straight line $\ell$ is parallel to the gravitational force. 
This follows from Varignon's Theorem of the resulting torque and the principle of angular momentum.
In case of a 
horizontal supporting line $\ell$, this means that $v$ is orthogonal to $\ell$.
The equilibrium is stable, if the potential energy of $K$ (i.e.~of its center of mass)
has a strict local minimum with respect to the direction $-g$, and it is unstable,
if the potential energy has a strict local maximum.
This translates into the following definition:
\begin{definition}
Let $K$ be strongly convex and compact with $\partial K$ of class $C^3$
parametrized by $\varphi\mapsto z(\varphi)=p(\varphi)u(\varphi)+p'(\varphi)u'(\varphi)$,
where the origin is chosen in the center of mass $O$ of $K$.
Then, a horizontal equilibrium position with respect to $O$ is a point $z(\varphi_0)\in\partial K$ such that $p'(\varphi_0)=0$. The equilibrium
is stable if $p$ has a strict local minimum in $\varphi_0$, and unstable if 
$p$ has a strict local maximum in $\varphi_0$.
\end{definition}
A horizontal equilibrium $z\in\partial K$ is therefore a point where the tangent at $z$ and
the line joining $z$ and the center of mass are perpendicular.
\begin{figure}[h!]
\begin{center}
\def\Epsilon{-.2}
\def\Delta{-.7}
\definecolor{ccwwqq}{rgb}{0.8,0.4,0.}
\begin{tikzpicture}[line cap=round,line join=round,x=.5cm,y=.5cm,
declare function={
        p(\t) = 4 +1.5*sin(-deg(\t + \Delta)) + .3*sin(deg(2*(-\t + \Delta))) + .3*sin(deg(3*(-\t + \Delta)));
        S1(\t) = -0.327392 + 4*\t + 0.6*cos(deg(2*(-0.7 - \t))) + 0.9*cos(deg(3*(-0.7 - \t))) + 1.5*cos(deg(0.7 + \t)) - 0.45*cos(deg(1.4 + 2*\t)) - 0.8*cos(deg(2.1 + 3*\t));
       S2(\t) = 4 + 0.3*sin(deg(2*(-0.7 - \t))) + 0.3*sin(deg(3*(-0.7 - \t))) - 1.5*sin(deg(0.7 + \t));
       null(\t)=0;
      kurve11(\t)= 9.32287 + 4*cos(deg(3.9708 - \t)) - 0.3*sin(deg(1.8708 - 4*\t)) - 0.15*sin(deg(2.5708 - 3*\t)) - 0.45*sin(deg(5.3708 +\t)) - 0.6*sin(deg(6.0708 + 2*\t));
      kurve12(\t)=3.98759 - 0.3*cos(deg(1.8708 - 4*\t)) - 0.15*cos(deg(2.5708 - 3*\t)) -  0.45*cos(deg(5.3708 +\t)) - 0.6*cos(deg(6.0708 + 2*\t)) - 4*sin(deg(3.9708 - \t));
      kurve21(\t)=0.330237 + 4*cos(deg(1.7658 - \t)) - 0.15*sin(deg(0.365797 - 3*\t)) -  0.45*sin(deg(3.1658 +\t)) - 0.6*sin(deg(3.8658 + 2*\t)) + 0.3*sin(deg(0.334203 + 4*\t));
      kurve22(\t)=3.57491 - 0.15*cos(deg(0.365797 - 3*\t)) -0.45*cos(deg(3.1658 +\t)) - 0.6*cos(deg(3.8658 + 2*\t)) - 0.3*cos(deg(0.334203 + 4*\t)) - 4*sin(deg(1.7658 -\t));
      kurve31(\t)=18.303 + 4*cos(deg(6.25898 - \t)) - 0.3*sin(deg(4.15898 - 4*\t)) -0.15*sin(deg(4.85898 - 3*\t)) - 0.45*sin(deg(7.65898 +\t)) -0.6*sin(deg(8.35898 + 2*\t);
      kurve32(\t)=4.42509 - 0.3*cos(deg(4.15898 - 4*\t)) -0.15*cos(deg(4.85898 - 3*\t])) - 0.45*cos(deg(7.65898 +\t)) -0.6*cos(deg(8.35898 + 2*\t)) - 4*sin(deg(6.25898 - \t));
      }]
\clip(-6,-1.4) rectangle (24,8.65);

\draw[black, line width=.6pt]  plot[domain=-10:50,samples=100] (\x,{null(\x)});

\draw[red,dash pattern={on 3pt off 2pt}]  plot[domain=-5:2.15,samples=100] ({S1(\x)},{S2(\x)});

\draw[red,dash pattern={on 3pt off 2pt}]  plot[domain=2.4:10,samples=100] ({S1(\x)},{S2(\x)});

\draw[ccwwqq, line width=1pt,fill=ccwwqq,fill opacity=0.25, ]  plot[domain=0:6.3,samples=100] ({kurve11(\x)},{kurve12(\x)});
\draw[ccwwqq, line width=1pt,fill=ccwwqq,fill opacity=0.25, ]  plot[domain=0:6.3,samples=100] ({kurve21(\x)},{kurve22(\x)});
\draw[ccwwqq, line width=1pt,fill=ccwwqq,fill opacity=0.25, ]  plot[domain=0:6.3,samples=100] ({kurve31(\x)},{kurve32(\x)});

\draw[darkgreen] (7.82416, 3.92522)--(7.82416, 0);
\begin{scriptsize}

\draw[magenta,line width=1pt] (7.82416, 0)--(9.61794,0);
\draw[color=magenta] (8.72105, 0) node[anchor= north] {$p'(\varphi)$};

\draw[darkgreen] (1.26851, 2.4046)--(1.26851, 0);
\draw[darkgreen] (19.2412, 5.5954)--(19.2412,0);

\draw[red] (7.82416, 3.92522)--(6.73578, 6.29019);
\draw[red] (1.26851, 2.4046)--(0.00833367, 0.126528);
\draw[red] (19.2412, 5.5954)--(21.7388, 4.86075);


\draw[red,fill=red] (7.82416, 3.92522) circle(1.5pt);
\draw[red,fill=red] (1.26851, 2.4046) circle(1.5pt);
\draw[red,fill=red] (19.2412, 5.5954) circle(1.5pt);

\draw[ccwwqq] (-.5, 6.3) node {$K$};
\draw[red] (1.26851, 2.4046) node[anchor=south] {$O$};

\draw[color=black] (7.82416, 3.8) node[anchor= east] {$\varphi$};

\draw[color=darkgreen] (7.82416, 1.9) node[anchor= west] {$p(\varphi)$};

\draw[->] plot[domain=2.00211:4.71239] ({7.82416+cos(deg(\x))},{ 3.92522+sin(deg(\x))});

\draw[black,fill=black] (9.61794, 0) circle(1.5pt);
\draw[black,fill=black] (1.26851, 0) circle(1.5pt);
\draw[black,fill=black] (19.2412, 0.) circle(1.5pt);

\draw[color=black] (23, 0) node[anchor= south] {$\ell$};
\end{scriptsize}

\end{tikzpicture}
\caption{$K$ rolling along the horizontal line $\ell$.
The dashed line is the trace of the red center of mass $O$. 
The solid red line emanating from $O$ corresponds to the angle $\varphi=0$.
Stable equilibrium on the left ($p$ has a strict local minimum), 
non-equilibrium in the middle ($p'(\varphi)\neq 0$), 
unstable equilibrium on the right ($p$ has a strict local maximum).}\label{fig-equilibrium}
\end{center}
\end{figure}
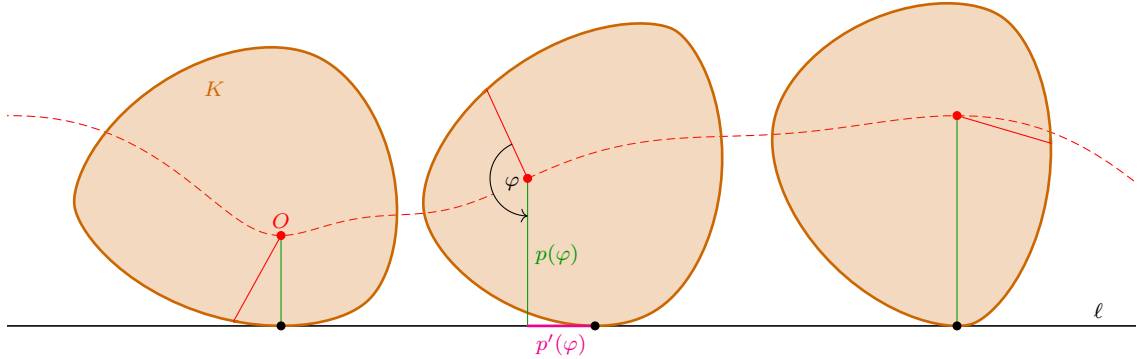
Figure~\ref{fig-equilibrium} shows a shape $K$ which has one stable
and one unstable horizontal equilibrium with respect to the center of mass $O$.
We start by investigating the number of equilibria for the special case of  the centroid $O$ of a homogeneous body.
\begin{prop}\label{number_equilibria}
Let $K$ be a convex and compact set with $C^1$ boundary. Then $K$ has at least four horizontal equilibria
with respect to its centroid.
\end{prop}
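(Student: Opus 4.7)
The plan is to reparametrize $\partial K$ in polar coordinates based at the centroid $O$, because the centroid condition takes a very clean form in those coordinates. Since $O$ lies in the interior of the convex body $K$, every ray from $O$ meets $\partial K$ in a unique point, giving a parametrization $\partial K=\{r(\theta)u(\theta):\theta\in S^{1}\}$ with $r>0$; the $C^{1}$ regularity of $\partial K$ together with convexity imply that $r\in C^{1}(S^{1})$ via a reparametrization from arclength (using that $\mathrm d\theta/\mathrm ds>0$ on a convex boundary from an interior point). The outward unit normal along this parametrization at $z=r\,u(\theta)$, obtained by rotating $z'(\theta)=r'u+ru'$ by $-\pi/2$, is proportional to $r\,u-r'\,u'$, which is parallel to the position vector $z=r\,u$ exactly when $r'(\theta)=0$. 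Thus the horizontal equilibria with respect to $O$ are precisely the critical points of $r$ on $S^{1}$.

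I would then translate the centroid condition into polar form. Computing in polar coordinates,
\[
A\cdot O=\int_{K}x\,\mathrm dA=\int_{0}^{2\pi}\!\!\int_{0}^{r(\theta)} r_{0}\,u(\theta)\,r_{0}\,\mathrm dr_{0}\,\mathrm d\theta=\frac{1}{3}\int_{0}^{2\pi} r(\theta)^{3}u(\theta)\,\mathrm d\theta,
\]
so $O=0$ is equivalent to the vanishing of the first Fourier coefficients of $r^{3}$:
\[
\int_{0}^{2\pi} r^{3}\cos\theta\,\mathrm d\theta=\int_{0}^{2\pi} r^{3}\sin\theta\,\mathrm d\theta=0.
\]

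The final step is to apply the Sturm--Hurwitz theorem to $f:=(r^{3})'=3r^{2}r'$, a continuous $2\pi$-periodic function. By periodicity $\int_{0}^{2\pi}f\,\mathrm d\theta=0$, and integration by parts combined with the centroid identities above gives
\[
\int_{0}^{2\pi} f\cos\theta\,\mathrm d\theta=\int_{0}^{2\pi} r^{3}\sin\theta\,\mathrm d\theta=0,\qquad\int_{0}^{2\pi} f\sin\theta\,\mathrm d\theta=-\int_{0}^{2\pi} r^{3}\cos\theta\,\mathrm d\theta=0.
\]
Thus $f$ is $L^{2}$-orthogonal to $\{1,\cos\theta,\sin\theta\}$, and the Sturm--Hurwitz theorem forces at least four sign changes of $f$ on $S^{1}$. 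Since $3r^{2}>0$, the sign changes of $f$ coincide with those of $r'$, yielding at least four zeros of $r'$ and hence at least four horizontal equilibria of $K$ with respect to $O$.

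The real obstacle, I think, is conceptual rather than technical: in the support-function parametrization the centroid condition is cubic in $p$ (it involves $\int p\rho\,(pu+p'u')\,\mathrm d\varphi=0$), and attempting to extract moment identities for $p'$ directly leads nowhere; the key is to switch to polar coordinates, where the centroid becomes linear in $r^{3}$ and Sturm--Hurwitz applies immediately. Once this idea is in hand, the only remaining technical point is the regularity claim $r\in C^{1}(S^{1})$ under $C^{1}$-regularity of $\partial K$, which is a standard arclength-reparametrization argument.
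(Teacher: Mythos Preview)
Your proof is correct and shares the same setup as the paper: parametrize $\partial K$ in polar coordinates about the centroid, identify horizontal equilibria with zeros of $r'$, and express the centroid condition as the vanishing of the first Fourier moments of $r^{3}$ (equivalently, after integration by parts, of $g=r^{2}r'$). The difference lies only in the final step. The paper argues by hand: assuming $r'$ has at most three zeros, it translates so that $r'>0$ exactly on an interval $(a,\pi-a)$ with $0\le a<\pi/2$, and then shows directly that $\int_{0}^{2\pi} g(\varphi)\sin\varphi\,\mathrm d\varphi>0$, contradicting the centroid condition. You instead invoke Sturm--Hurwitz on $f=(r^{3})'=3r^{2}r'$. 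The paper explicitly notes, immediately after its proof, that the result ``could also be obtained using the Sturm--Hurwitz Theorem,'' so your route is precisely the acknowledged alternative. Sturm--Hurwitz makes the conclusion cleaner and more conceptual; the paper's elementary estimate has the minor virtue of being self-contained.
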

%
%
%
\begin{proof}
Suppose the boundary $\partial K$ is
given in polar coordinates as $r:S^1\to (0,\infty), \varphi\mapsto r(\varphi)$,
such that the origin is the centroid of $K$. The tangent in a point 
$z(\varphi)=r(\varphi)(\cos(\varphi),\sin(\varphi))^\top\in\partial K$ is perpendicular
to the line joining $z(\varphi)$ with the origin if and only if $r'(\varphi)=0$.
So we have to show that $r'$ has at least four zeros on $[0,2\pi)$. The condition that the centroid is at the origin leads upon integrating by parts to
\begin{equation}\label{centroidcondition}
\int_0^{2\pi}r^2(\varphi)r'(\varphi)\begin{pmatrix}\sin(\varphi)\\ -\cos(\varphi)\end{pmatrix}\,\mathrm d\varphi = 0.
\end{equation}
This implies that \eqref{centroidcondition} remains valid if $g(\varphi) = r^2(\varphi)r'(\varphi)$ is replaced by any translation $\varphi \mapsto g(\varphi-c)$, where $c\in\mathbb R$. We will now assume that there is no interval on which $r$ is constant, otherwise there is nothing to show. If $r'$ has only two zeros, then $r'>0$ on an interval of length $l\in(0,\frac\pi2]$, or 
$r'<0$ on an interval of length $l\in(0,\frac\pi2]$. We only discuss the first case (the second case is analogue). 
By a suitable translation we may assume that $r'>0$ on $(a,\pi-a)$, where $0\leq a <\pi/2$. By periodicity of $r^3$, we find
$$
\int_0^{2\pi}g(\varphi)\,\mathrm d\varphi = 0,
$$
and therefore
$$
\int_{0}^{\pi}g(\varphi)\,\mathrm d\varphi > 0.
$$
It follows that
\begin{equation}\label{integral1}\begin{aligned}
\int_0^{\pi}g(\varphi)\sin(\varphi)\,\mathrm d\varphi >\sin(a)\int_{0}^{\pi}g(\varphi)\,\mathrm d\varphi \geq0.
\end{aligned}\end{equation}
On the other hand
\begin{equation}\label{integral2}
\int_{\pi}^{2\pi}g(\varphi)\sin(\varphi)\,\mathrm d\varphi>0,
\end{equation}
and \eqref{integral1} and \eqref{integral2} contradict \eqref{centroidcondition}. Observe that the argument goes through if $r'$ has a third zero either in $(a,\pi-a)$ or in $[0,2\pi)\setminus (a,\pi-a)$ and hence we conclude that $r'$ must have at least 4 zeros as claimed.
\end{proof}
The previous proposition already appears in~\cite{Domokos1994} and could also be obtained using the Sturm-Hurwitz Theorem (Theorem 5.16 in \cite{tabachnikov}).

The next theorem reveals a connection between the number of equilibrium points of $K$
with respect to an arbitrary point $O$ which is not a point of the evolute of $\partial K$ and the winding number of the evolute of $\partial K$ around $O$.

\begin{theorem}\label{main_simple_zeros}
Let $K$ be a strongly convex compact set with $C^3$-boundary $\partial K$, and $O$ a point
in the plane. Suppose that $O$ is not a point of the evolute of $\partial K$. Then 
the number $n$ of horizontal equilibria of $K$ with respect to $O$ is given by
$$
n=2-2m,
$$
where $0\geq m\in\mathbb Z$ is the winding number of the evolute of $\partial K$ with respect to $O$.
\end{theorem}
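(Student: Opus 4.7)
Place $O$ at the origin; by Lemma~\ref{regularity}, $\partial K$ admits a parametrization $z(\varphi)=p(\varphi)u(\varphi)+p'(\varphi)u'(\varphi)$ with $p\in C^3(S^1,\R)$, so the horizontal equilibria of $K$ with respect to $O$ are in bijection with the zeros of $p'$ on $S^1$. Since $u(\varphi),u'(\varphi)$ form an orthonormal frame, the hypothesis that $O$ lies off the evolute $e=p'u'-p''u$ is equivalent to $p'$ and $p''$ never vanishing simultaneously. In particular every zero of $p'$ is simple, $n$ is finite, and the smooth loop
$$
\Phi:S^1\to\R^2\setminus\{0\},\qquad \Phi(\varphi)=(p'(\varphi),p''(\varphi)),
$$
has a well-defined winding number $w(\Phi,0)$.

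The first step is to prove $n=-2\,w(\Phi,0)$. Because $\tfrac{d}{d\varphi}p'=p''$, the loop $\Phi$ moves rightwards in the upper half-plane and leftwards in the lower half-plane, so at each simple zero $\varphi_0$ of $p'$ the curve crosses the vertical axis transversally, from left to right when $p''(\varphi_0)>0$ and from right to left when $p''(\varphi_0)<0$. Each such crossing contributes $-1$ to the signed intersection count with the corresponding half-axis. Computing $w(\Phi,0)$ via either the positive or the negative vertical half-axis then gives $w(\Phi,0)=-N_+=-N_-$, where $N_\pm$ denotes the number of zeros of $p'$ at which $\pm p''>0$; in particular $n=N_++N_-=-2\,w(\Phi,0)$. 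This identity is precisely the argument-principle-style zero-counting integral of~\cite{zeros} applied to the complex-valued loop $p'+ip''$.

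The second step converts $w(\Phi,0)$ into the winding number $m$ of the evolute. Identifying $\R^2\cong\C$ via $u(\varphi)\leftrightarrow e^{i\varphi}$ and $u'(\varphi)\leftrightarrow ie^{i\varphi}$, the evolute factors as
$$
e(\varphi)=p'(\varphi)\,ie^{i\varphi}-p''(\varphi)\,e^{i\varphi}=ie^{i\varphi}\bigl(p'(\varphi)+ip''(\varphi)\bigr),
$$
where all three factors are nowhere zero on $S^1$. Additivity of the winding number under pointwise multiplication of $\C\setminus\{0\}$-valued loops gives $m=1+w(\Phi,0)$, and combining this with $n=-2\,w(\Phi,0)$ yields $n=2-2m$.

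The main obstacle is making the crossing argument in the first step fully rigorous; the remaining manipulations are elementary. I expect to rely on the zero-counting formula of~\cite{zeros} to express $w(\Phi,0)$ directly as an integral of $d\arg(p'+ip'')$ over $S^1$, whose only discrete contributions come from the transverse axis crossings identified above, with the alternation of signs of $p''$ at consecutive zeros of $p'$ producing the factor of $2$. Once that identity is in hand, the multiplicative factorization $e=ie^{i\varphi}(p'+ip'')$ finishes the proof mechanically.
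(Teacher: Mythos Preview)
Your proof is correct and follows essentially the same route as the paper's: both observe that $O$ off the evolute forces the zeros of $p'$ to be simple, identify $n$ with twice a winding number of the loop $(p',p'')$ (equivalently $(-p'',p')$) via the zero-counting result of~\cite{zeros}, and then factor the evolute as one full rotation applied to this loop---your complex form $e=ie^{i\varphi}(p'+ip'')$ is exactly the paper's $e=R(\varphi)(-p'',p')^\top$. Your crossing argument for $n=-2\,w(\Phi,0)$ is in fact already rigorous as written (the alternation of the sign of $p''$ at consecutive simple zeros of $p'$ gives $N_+=N_-$ immediately), so you do not really need to fall back on the integral formula from~\cite{zeros} there.
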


\begin{proof} We consider the parametrisation $z(\varphi) = p(\varphi)u(\varphi) + p'(\varphi)u'(\varphi)$
of $\partial K$ with origin $O$.
The function $p$ is of class $C^3$ by Lemma \ref{regularity} and hence $p'$ is of class $C^2$. 
The evolute $e$ of $\partial K$ is then given by $e(\varphi)=p'(\varphi)u'(\varphi)-p''(\varphi)u(\varphi)$. 
In particular, since $O$ is not a point on $e$, $p'$ can only have simple zeros and by periodicity of $p$, the number $n$ of zeros of $p'$ is at least $2$. Then according to Lemma 1.1 in~\cite{zeros}, $n$ and hence the 
number of horizontal equilibria of $K$ is given by
\begin{equation}\label{numberofsimpleequilibria}
n = \frac{1}{\pi}\int_0^{2\pi}\frac{p''(\varphi)^2-p'(\varphi)p'''(\varphi)}{p'(\varphi)^2+p''(\varphi)^2}\,\mathrm d\varphi.
\end{equation}
Hence $n$ equals twice the winding number of the curve $\varphi\mapsto (p''(\varphi),p'(\varphi))$ with respect to $O$. 
The evolute can be rewritten as follows:
$$
e(\varphi) = 
\underbrace{\begin{pmatrix}\cos(\varphi) & -\sin(\varphi) \\ \sin(\varphi) & \cos(\varphi)\end{pmatrix}}_{=:R(\varphi)}\begin{pmatrix}-p''(\varphi)\\p'(\varphi)\end{pmatrix}.
$$
Since $R(\varphi)$ causes one counterclockwise rotation around the origin and since the winding number of 
$\varphi\mapsto (-p''(\varphi),p'(\varphi))$ equals $-\frac n2$, the winding number $m$ of the evolute is given by $m=1-\frac{n}{2}$.
The claim follows immediately.
\end{proof}
{\em Remark.} According to Section~\ref{const-width} the number of equilibria of a curve of constant width 
with respect to a point not on the evolute is 2 modulo 4.

The foregoing proof can be obtained by a direct computation which remains valid in a more general setting: Since $e$ is a piecewise $C^2$ immersion under the assumptions of Lemma \ref{cuspsofevolute}, the winding number of $e$ with respect to $O$ is given (see Proposition 2.3 in~\cite{residue}) by
\begin{equation}\label{windingnumberevolute}
\begin{aligned}
m  = \frac{1}{2\pi}\int_0^{2\pi}\frac{\langle Je,e'\rangle}{\|e\|^2}\,\mathrm d\varphi
= \frac{1}{2\pi}\int_0^{2\pi}\frac{p'(p'+p''')}{p'^2+p''^2}\,\mathrm d\varphi
\end{aligned}
\end{equation}
and the corresponding integrand is bounded. In the case of simple zeros of $p'$ as discussed in Theorem \ref{main_simple_zeros}, the integrand is even continuous. Then it holds that

\begin{equation}\label{directcomputation}\begin{aligned}
-2m+2 & \stackrel{\eqref{windingnumberevolute}}{=} \frac{1}{\pi}\int_0^{2\pi}\frac{-p'(p'+p''')}{p'^2+p''^2}\,\mathrm d\varphi + \frac{1}{\pi}\int_0^{2\pi}\frac{p'^2+p''^2}{p'^2+p''^2}\,\mathrm d\varphi \\
& = \frac{1}{\pi}\int_0^{2\pi}\frac{p''^2-p'p'''}{p'^2+p''^2}\,\mathrm d\varphi \stackrel{\eqref{numberofsimpleequilibria}}{=} n.
\end{aligned}\end{equation}
The last equality of this computation also holds true by Theorem 2.4 in~\cite{zeros} in a more general setting: In particular, the computation remains valid if $p'$ has zeros of order at most $2$ and the relevant integrands are continuous by the following lemma:

\begin{lemma}\label{regularityintegrand}
If $p\in C^k$, $k\geq 3$ and $p'$ only has zeros of order at most $k-1$, then the integrands in \eqref{directcomputation} are continuous.
\end{lemma}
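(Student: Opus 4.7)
My plan is as follows. Both integrands in~\eqref{directcomputation} share the common denominator $D := (p')^2 + (p'')^2$, and their numerators are polynomials in $p', p'', p'''$. Since $p \in C^k$ with $k \geq 3$, the functions $p', p'', p'''$ are all continuous, so both integrands are automatically continuous wherever $D > 0$. The only points requiring inspection are those $\varphi_0 \in S^1$ where $p'(\varphi_0) = p''(\varphi_0) = 0$; at such a point the zero of $p'$ has order $j \geq 2$, and by hypothesis $2 \leq j \leq k-1$.

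Near such a $\varphi_0$ I would apply Taylor's theorem with integral remainder to extract the local vanishing. Since $p' \in C^{k-1}$ and $j \leq k-1$, we have $p' \in C^j$, so
$$
p'(\varphi) = (\varphi - \varphi_0)^j \, G_1(\varphi)
$$
with $G_1$ continuous in a neighbourhood of $\varphi_0$ and $G_1(\varphi_0) = p^{(j+1)}(\varphi_0)/j! \neq 0$. The same argument applied to $p''$ and $p'''$, which have zeros of order $j-1$ and $j-2$ at $\varphi_0$ and are of class $C^{j-1}$ and $C^{j-2}$ respectively (again thanks to $j \leq k-1$), yields
$$
p''(\varphi) = (\varphi - \varphi_0)^{j-1} G_2(\varphi), \qquad p'''(\varphi) = (\varphi - \varphi_0)^{j-2} G_3(\varphi),
$$
with $G_2, G_3$ continuous near $\varphi_0$ and $G_2(\varphi_0), G_3(\varphi_0) \neq 0$.

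Substituting these factorisations into either integrand and pulling out the common factor $(\varphi - \varphi_0)^{2(j-1)}$ from numerator and denominator, I would rewrite the denominator as $(\varphi - \varphi_0)^2 G_1^2 + G_2^2$, which is continuous and bounded away from zero on a neighbourhood of $\varphi_0$ because $G_2(\varphi_0) \neq 0$. The numerators transform in the same way into continuous expressions in $G_1, G_2, G_3$ and $(\varphi - \varphi_0)$, so both integrands extend continuously to $\varphi_0$. I expect the main technical point to be the bookkeeping of orders in the first integrand: the term $(p')^2$ contributes at order $2j$ while $p' p'''$ and $(p'')^2$ contribute at order $2(j-1)$, and one has to check carefully that after dividing by $(\varphi - \varphi_0)^{2(j-1)}$ the surviving expression is genuinely continuous. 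This is routine once the Taylor factorisations are in place, and the hypothesis $j \leq k-1$ is exactly what guarantees that these factorisations produce continuous factors $G_i$.
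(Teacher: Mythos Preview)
Your proposal is correct and follows essentially the same approach as the paper: both arguments Taylor-expand $p'$, $p''$, $p'''$ at a zero $\varphi_0$ of $p'$ of order $j\le k-1$, factor out the common power $(\varphi-\varphi_0)^{2(j-1)}$ from numerator and denominator, and use that the leading coefficient of $p''$ is nonzero. The only cosmetic differences are that the paper treats only the extremal case $j=k-1$ explicitly and additionally computes the limiting values $\tfrac{1}{k-1}$ and $\tfrac{k-2}{k-1}$, while you handle general $j$ but stop at continuity; also note that your assertion $G_3(\varphi_0)\neq 0$, while true, is not actually needed---only $G_2(\varphi_0)\neq 0$ is used.
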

{\em Proof.}
It suffices to show the continuity of the integrands in $0$ provided $\varphi = 0$ is a zero of $p'$ of multiplicity $k-1$. Using Proposition 2.5 in~\cite{zeros} we find by Taylor expansion
\begin{align*}
p'(\varphi) & = \left(\frac{p^{(k)}(0)}{(k-1)!}+r_0(\varphi)\right)\varphi^{k-1},\\
p''(\varphi) & = \left(\frac{p^{(k)}(0)}{(k-2)!}+r_1(\varphi)\right)\varphi^{k-2},\\
p'''(\varphi) & = \left(\frac{p^{(k)}(0)}{(k-3)!}+r_2(\varphi)\right)\varphi^{k-3},\\
\end{align*}
where $r_i$ are continous functions with $\lim_{\varphi\to 0}r_i(\varphi) = 0$. Then
$$
\lim_{\varphi\to 0}\frac{p''(\varphi)^2-p'(\varphi)p'''(\varphi)}{p'(\varphi)^2+p''(\varphi)^2} = \frac{1}{k-1}
$$
and
\makeatletter
\renewcommand\tagform@[1]{\maketag@@@{\ignorespaces#1\unskip\@@italiccorr}}
\makeatother
\begin{equation}
\lim_{\varphi\to 0}\frac{p'(\varphi)(p'(\varphi)+p'''(\varphi))}{p'(\varphi)^2+p''(\varphi)^2} = \frac{k-2}{k-1}.\tag{$\Box$}
\end{equation}
\makeatletter
\renewcommand\tagform@[1]{\maketag@@@{\ignorespaces(#1)\unskip\@@italiccorr}}
\makeatother

In order to prove Theorem~\ref{main_arbitrary_zeros}
it remains to discuss the cases where the center of mass $O$ of $K$ is 
possibly a point of the evolute. We continue to assume, as in 
Lemma \ref{cuspsofevolute}, that the radius of 
curvature of $\partial K$ has only finitely many stationary 
points and $\partial K$ is of class $C^3$. We will distinguish two cases:


\begin{enumerate}
\item If $O$ is a \emph{regular point} of the evolute of $\partial K$, then whenever $e(\varphi_0)=0$, it holds that $e'(\varphi_0)\ne0$. 
This corresponds to the two black points in Figure~\ref{fig:example} which are labeled by 3 and 4.
Since $e'=-\rho'u$ this means that $\varphi_0$ is not a stationary point of $\rho$ and hence $p'''(\varphi)\ne 0$. Therefore the set $e^{-1}(0)$ consists of zeros of $p'$ of multiplicity $2$ and we conclude that $p'$ has zeros of order at most $2$.


In this case, computation~\eqref{directcomputation} remains valid by Proposition 2.3 in~\cite{residue} and Theorem 2.4 in~\cite{zeros} and the integrands are continuous according to Lemma~\ref{regularityintegrand}. Proposition 2.2 in~\cite{residue} tells us (since the angles in $O$ are equal to $\pi$) that $2m\in\mathbb Z$ and we conclude that $n=2-2m$, but $m$ might be half-integer valued.

\item If $O$ is a \emph{singular point} of the evolute of $\partial K$, there exist values $\varphi_0$ such that $e(\varphi_0)=e'(\varphi_0)=0$.
See, e.g., the  black point in Figure~\ref{fig:example} which is labeled by 2.
 In this case, $\varphi_0$ is a stationary point of $\rho$ which is either a saddle point or a cusp of $e$ according to Lemma~\ref{cuspsofevolute}. Since $e=p'u'-p''u$ and $e'=-(p'+p''')u$ we conclude that such points are zeros of $p'$ of order at least 3.

The computation \eqref{directcomputation} remains valid in this case if we can show that $p'$ is an \emph{admissible} function in the sense of Definition 2.4 in~\cite{zeros}. More precisely, the first equality is then justified by Proposition 2.3 in~\cite{residue} and the last one by Theorem 2.4 in~\cite{zeros}. According to Proposition 2.2 in~\cite{residue} (since the angles in $O$ are $0$, $\pi$ or $2\pi$) we find again $2m\in\mathbb Z$.

Since $p'\in C^2$, it suffices to show that the zeros of $p'$ are \emph{admissible} in the sense of Definition 2.1 in~\cite{zeros}, i.e.\ we have to show that whenever $p'(\varphi_0)=0$, then
$$
\lim_{\varphi\nearrow \varphi_0}\frac{p''(\varphi)}{p'(\varphi)}=-\infty \text{ and }\lim_{\varphi\searrow \varphi_0}\frac{p''(\varphi)}{p'(\varphi)}=+\infty.
$$

Let $\varphi_0=0$ be a zero of $p'$. If this zero is of multiplicity one or two, then the admissibility follows immediately from the 5th point in the Remark after Definition 2.1 in~\cite{zeros}. In the present case we assume that $p'(0)=p''(0)=p'''(0)=0$. In this case, $\rho(0)=p(0)$ and $\rho'(0)=0$ and we can solve the ODE $\rho=p+p''$ with  initial value $p'(0)=0$ in order to obtain
$$
p(\varphi)= \int_0^\varphi \sin(\varphi-t)\rho(t)\,\mathrm dt+p(0)\cos(\varphi)=\int_0^\varphi \sin(\varphi-t)(\rho(t)-\rho(0))\,\mathrm dt+\rho(0).$$
Upon integrating by parts (since $\rho$ is of class $C^1$) we get the formulas
$$\begin{aligned}
p(\varphi) & = \rho(\varphi) - \int_0^{\varphi}\cos(\varphi-t)\rho'(t)\,\mathrm dt,\\
p'(\varphi) & = \int_0^{\varphi}\sin(\varphi-t)\rho'(t)\,\mathrm dt,\\
p''(\varphi) & = \int_0^{\varphi}\cos(\varphi-t)\rho'(t)\,\mathrm dt. \end{aligned}$$

Since the number of zeros of $\rho'$ is finite, we can consider the case where e.g.\ $\rho' > 0$ on $(0,\varphi)$ provided $\varphi>0$ is small enough. Then

$$p''(\varphi)  \geq \int_0^\varphi (1-(\varphi-t)^2)\rho'(t)\,\mathrm dt\geq (1-\varphi^2)\int_0^{\varphi}\rho'(t)\,\mathrm dt$$
and
$$
p'(\varphi) \leq \varphi \int_0^{\varphi}\rho'(t)\,\mathrm dt.
$$
We conclude that
$$
\frac{p''(\varphi)}{p'(\varphi)}\geq \frac{1-\varphi^2}{\varphi}\stackrel{\varphi \searrow 0}{\to}+\infty.
$$
The remaining cases are similar and we find
$$
\lim_{\varphi\nearrow 0}\frac{p''(\varphi)}{p'(\varphi)}=-\infty\text{ and }\lim_{\varphi\searrow 0}\frac{p''(\varphi)}{p'(\varphi)}=+\infty
$$
and therefore, the zeros of $p'$ are admissible.
\end{enumerate}
This concludes the proof of Theorem~\ref{main_arbitrary_zeros}.


\emph{Remarks.}
\begin{enumerate}
\item According to Section~\ref{const-width} the number of equilibria of a curve of constant width with respect to a point on the evolute is even.

\item It follows from the conclusion of Theorem \ref{main_arbitrary_zeros}, that the number of zeros of $p'$ is finite. This also follows a priori from the fact that the number of extrema of $\rho$ is finite. Indeed, if $p'(\varphi_0)=0$, then the tangent of $z$ in $\varphi_0$ is perpendicular to $z(\varphi_0)$ and $z(\varphi_0)$ is parallel to $e'(\varphi_0)$. Since every arc of the evolute $e$ is convex, there are at most 2 tangents to such an arc through $z(\varphi_0)$. 
Since the curvature of $\partial K$ has only finitely many stationary points, $e$ is made of only finitely many arcs and there are only twice as many zeros of $p'$ as there are extrema of $\rho$.
\item For points on the evolute, one can formulate the result alternatively as follows: If the center of mass $O$ lies on
the evolute, then the number of equilibrium positions with respect to $O$ is the
average of the number of equilibrium positions in the neighbouring areas defined by the evolute,
where each neighbouring area is weighted by its angle in $O$. For example, the number
of equilibrium positions in the black points in Figure~\ref{fig:example} can be obtained
in this way: The 3 is the average of 2 and 4, the 4 is the the average of 
4 and 4 (with equal weight) and 2 and 6 (with equal weight), and the 2 is
the average of 2 (with full weight) and 4 (with weight zero).
\end{enumerate}

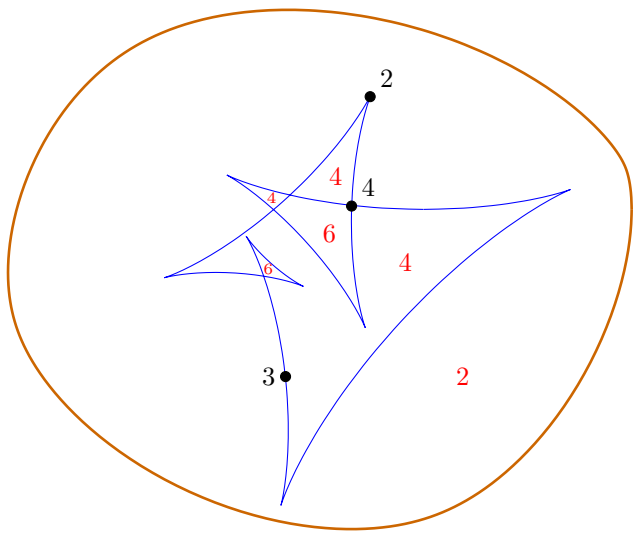
\begin{figure}[h!]
\begin{center}
\definecolor{ccwwqq}{rgb}{0.8,0.4,0.}
\begin{tikzpicture}[scale=3.8,domain=0:6.28319]
\draw[line width=1pt,smooth,samples=100,color=ccwwqq,domain=0:6.28319] plot ({-0.1 + 1.13636*cos(\x r) - 0.0454545*cos(3*\x r) + 
 0.1*cos(\x r)*sin(\x r) + 0.0689655*sin(3*\x r) - 0.025*sin(4*\x r) - 
 0.0413793*sin(5*\x r)},{-0.1 + 0.05*cos(2*\x r) + 0.0689655*cos(3*\x r) + 0.025*cos(4*\x r) + 
 0.0413793*cos(5*\x r) + 0.863636*sin(\x r) - 0.0454545*sin(3*\x r)});
 \draw[smooth,samples=100,color=blue,domain=0:6.28319] plot ({-0.1 + 0.272727*cos(\x r) + 0.0909091*cos(3*\x r) + 
 0.3*cos(\x r)*sin(\x r) + 
 0.275862*sin(3*\x r) + 0.075*sin(4*\x r) + 
 0.165517*sin(5*\x r)},{-0.1 + 0.15*cos(2*\x r) + 0.275862*cos(3*\x r) - 
 0.075*cos(4*\x r) - 0.165517*cos(5*\x r) - 
 0.272727*sin(\x r) + 0.0909091*sin(3*\x r)});
 \draw[color=red] (-.066,0) node {\small$6$};
 \draw[color=red] (.4,-.5) node {\small$2$};
 \draw[color=red] (.2,-.1) node {\small$4$};
 \draw[color=red] (-.044,.2) node {\small$4$};
 \draw[color=red] (-.269,.124) node {\tiny$4$};
 \draw[color=red] (-.28,-.124) node {\tiny$6$};

\draw [fill=black] (.011,.097) circle (.5pt)  node[anchor=south west] {\small$4$};
\draw [fill=black] (.076,.48) circle (.5pt)  node[anchor=south west] {\small$2$};
\draw [fill=black] (-.22,-.5) circle (.5pt)  node[anchor=east] {\small$3$};

\end{tikzpicture}
\caption{The number of equilibria with respect to a given center of mass
is 2 minus twice the winding number of the evolute. The number of
equilibria is indicated in the figure for areas bounded by the evolute in red, 
and for selected black points on the evolute.}\label{fig:example}
\end{center}
\end{figure}

\section{Oblique equilibria}\label{sectionoblique}
Here we investigate the equilibrium positons of $K$ with respect to
a center of mass $O$ on an oblique line $\ell$ with angle of inclination $\alpha\neq 0$.
The situation is shown in Figure~\ref{fig:oblique}. We can immediately
read off the condition for an equilibrium position in terms of the support function $p$:
An equilibrium point is characterised by the condition
\begin{equation}\label{eq:oblique}
p'(\varphi) = \tan(\alpha)p(\varphi),
\end{equation}
or, if $p(\varphi)\neq 0$, equivalently by
$$
\tan(\alpha)=\frac{p'(\varphi)}{p(\varphi)}=(\ln |p(\varphi)|)'.
$$
In particular, the number $n_\alpha$ of solutions of~(\ref{eq:oblique}) on $[0,2\pi)$ corresponds to the
number of equilibrium points. This number varies with $\alpha$: See Figure~\ref{fig:equi}. 
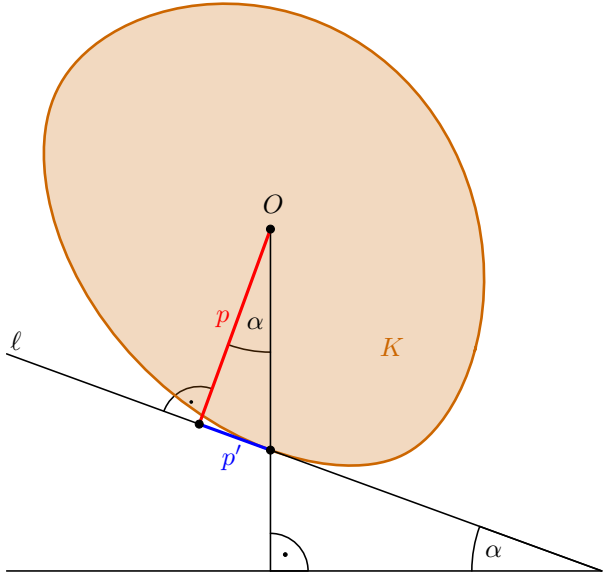
\begin{figure}[h!]
\begin{center}
\definecolor{ccwwqq}{rgb}{0.8,0.4,0.}
\begin{tikzpicture}[line cap=round,line join=round,x=1.0cm,y=1.0cm,
declare function={
f1(\t) = 14.3828+cos(deg(6.54781-\t))*(3.0+0.01*cos(deg(1.0+\t))+0.3*cos(deg(2.0*(1.0+\t)))+0.1*cos(deg(3.0*(1.0+\t)))-0.01*sin(deg(1.0+\t))+0.3*sin(deg(2.0*(1.0+\t))))+sin(deg(6.54781-\t))*(-0.01*cos(deg(1.0+\t))+0.6*cos(deg(2.0*(1.0+\t)))-0.01*sin(deg(1.0+\t))-0.6*sin(deg(2.0*(1.0+\t)))-0.3*sin(deg(3.0*(1.0+\t))));
f2(\t) = -2.30993-sin(deg(6.54781-\t))*(3.0+0.01*cos(deg(1.0+\t))+0.3*cos(deg(2.0*(1.0+\t)))+0.1*cos(deg(3.0*(1.0+\t)))-0.01*sin(deg(1.0+\t))+0.3*sin(deg(2.0*(1.0+\t))))+cos(deg(6.54781-\t))*(-0.01*cos(deg(1.0+\t))+0.6*cos(deg(2.0*(1.0+\t)))-0.01*sin(deg(1.0+\t))-0.6*sin(deg(2.0*(1.0+\t)))-0.3*sin(deg(3.0*(1.0+\t))));
}]
\clip(10.874812424743041,-7.0478920695722) rectangle (18.93098571027498,0.935539244984225);

\draw [shift={(18.787449918128235,-6.857954565316334)},line width=.6pt] (0,0) -- (159.94631682876312:1.7276218106671206) arc (159.94631682876312:180.:1.7276218106671206) ; 
\draw [shift={(14.3828,-2.30993)},line width=.6pt] (0,0) -- (-110.05352282957887:1.6366943469477984) arc (-110.05352282957887:-89.9999817003804:1.6366943469477984) ; 
\draw [shift={(14.3828,-6.857954565316333)},line width=.6pt] (0,0) -- (0.:.5) arc (0.:89.9999665361928:.5) ; 
\draw [shift={(13.435734986778147,-4.904426119164102)},line width=.6pt] (0,0) -- (69.94647717042115:0.5) arc (69.94647717042115:159.94647717042122:0.5) ; 
\draw[ccwwqq, line width=1pt,fill=ccwwqq,fill opacity=0.25, ]  plot[domain=0:6.3,samples=100] ({f1(\x)},{f2(\x)});

\draw [line width=.6pt,domain=10.874812424743041:18.787449918128235] plot(\x,{(-0.--6.857954565316334*\x)/-18.787449918128235});
\draw [line width=.6pt,domain=10.874812424743041:18.787449918128235] plot(\x,{(--128.60141327971814-0.*\x)/-18.752153000562526});
\draw [line width=.6pt] (14.3828,-2.30993)-- (14.3828,-6.857954565316333);
\draw [line width=1.2pt,color=red] (14.3828,-2.30993)-- (13.435734986778147,-4.904426119164102);
\fill[line width=.6pt] (13.33,-4.61) circle (0.7pt);
\draw [line width=1.2pt,color=blue] (13.435734986778147,-4.904426119164102)-- (14.382800939062637,-5.2501182094154135);
\begin{small}
\draw[color=ccwwqq] (16,-3.8745235857678804) node {$K$};
\draw [fill=black] (14.3828,-2.30993) circle (1.5pt);

\draw [fill=black] (14.58,-6.64) circle (.7pt);

\draw[color=black] (14.420983509796605,-1.98) node {$O$};
\draw[color=black] (11,-3.8) node {$\ell$};
\draw [fill=black] (13.435734986778147,-4.904426119164102) circle (1.5pt);
\draw[color=red] (13.75,-3.5) node {$p$};
\draw [fill=black] (14.382800939062637,-5.2501182094154135) circle (1.5pt);
\draw[color=black] (17.35,-6.6) node {$\alpha$};
\draw[color=black] (14.18,-3.56) node {$\alpha$};
\draw[color=blue] (13.875418727480671,-5.35) node {$p'$};
\end{small}
\end{tikzpicture}
\caption{Equilibrium position on an oblique line $\ell$.}\label{fig:oblique}
\end{center}
\end{figure}

If we denote by $v=\begin{pmatrix}\textcolor{white}{-}\cos(\alpha)\\-\sin(\alpha)\end{pmatrix}$
the vector in the downhill direction of $\ell$ and by $s(\varphi)$ the arclength on
$\partial K$ corresonding to the parameter interval $[0,\varphi]$ we can express
the position of $O$ in coordinates with respect to fixed horizontal and vertical axis as
$$
O(\varphi)=\begin{pmatrix}O_1(\varphi)\\O_2(\varphi)\end{pmatrix}=(s(\varphi)-p'(\varphi))v+ p(\varphi)v^\bot,
$$
where $v^\bot=\begin{pmatrix}\sin(\alpha)\\\cos(\alpha)\end{pmatrix}$. An equilibrium
corresponds  to a point with stationary potential energy, i.e., $O_2'(\varphi)=0$. 
A sufficient condition for an equilibrium to be stable is
 $O_2''(\varphi)>0$, corresponding to a strict local minimum of the 
potential energy. Similarly,  $O_2''(\varphi)<0$ implies that an equilibrium is unstable. According to~(\ref{eq-arc}) we have
$$
O_2'(\varphi)=p'(\varphi)\cos(\alpha)-p(\varphi)\sin(\alpha),\text{ and }
O_2''(\varphi)=p''(\varphi)\cos(\alpha)-p'(\varphi)\sin(\alpha).
$$
Thus, for an equilibrium $O_2'(\varphi)=0$, we obtain
\begin{itemize}
\item  if $p''(\varphi)>p(\varphi)\tan^2(\alpha)$, then $\varphi$ is a stable equilibrium,
\item  if $p''(\varphi)<p(\varphi)\tan^2(\alpha)$, $\varphi$ is an unstable equilibrium.
\end{itemize}
In particular a center of mass $O$ on $\partial K$ is always a stable equilibrium.
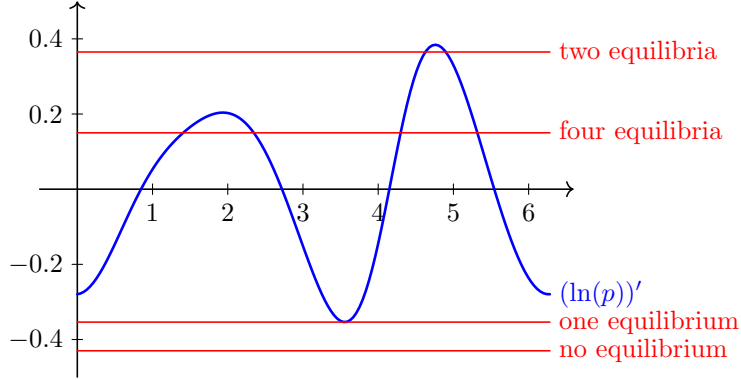
\begin{figure}[h!]
\begin{center}
\begin{tikzpicture}[line cap=round,line join=round,x=1.0cm,y=5.0cm]

\draw[->, line width=.6pt] (-.5,0)--(6.6,0);
\draw[->, line width=.6pt] (0,-.5)--(0,.5);

\begin{small}
\foreach \x/\xtext in {1/1, 2/2, 3/3, 4/4, 5/5, 6/6}
    \draw[shift={(\x,0)}] (0pt,2pt) -- (0pt,-2pt) node[below] {$\xtext$};
    
\foreach \y/\ytext in { -0.2/-0.2, -0.4/-0.4, 0.2/0.2, 0.4/0.4}
    \draw[shift={(0,\y)}] (2pt,0pt) -- (-2pt,0pt) node[left] {$\ytext$};

\draw [color=blue, line width=1pt,domain=-0:6.28319, samples=150] plot (\x, {(-0.01*cos((1 + \x) r) + 0.6*cos((2*(1 + \x)) r) - 0.01*sin((1 + \x) r) - 
      0.6*sin((2*(1 + \x)) r) - 0.3*sin((3*(1 + \x)) r))/(3 + 
   0.01*cos((1 + \x) r) + 
      0.3*cos((2*(1 + \x))r) + 0.1*cos((3*(1 + \x)) r) - 0.01*sin((1 + \x) r) + 
   0.3*sin((2*(1 + \x)) r))});
   
\draw [color=red, line width=.6pt,domain=-0:6.28319, samples=5] plot (\x, {tan(.35 r)});   
\draw [color=red, line width=.6pt,domain=-0:6.28319, samples=5] plot (\x, {.15});   
\draw [color=red, line width=.6pt,domain=-0:6.28319, samples=5] plot (\x, {-0.353677});   
\draw [color=red, line width=.6pt,domain=-0:6.28319, samples=5] plot (\x, {-.43});

\draw[color=blue] (6.28319,-0.27952) node[anchor=west] {$(\ln(p))'$};
\draw[color=red] (6.28319,0.365) node[anchor=west] {two equilibria};
\draw[color=red] (6.28319,0.15) node[anchor=west] {four equilibria};
\draw[color=red] (6.28319,-0.353677) node[anchor=west] {one equilibrium};
\draw[color=red] (6.28319,-0.43) node[anchor=west] {no equilibrium};

\end{small}
\end{tikzpicture}
\caption{Number of equilibrium points with respect to $\alpha$. The value 
$\tan(\alpha)$ is drawn for several values of $\alpha$.
}\label{fig:equi}
\end{center}
\end{figure}

Interesting observations are
\begin{prop}\label{obliqueprop}
\begin{enumerate}
\item There are shapes $K$ which have oblique equilibrium points with respect to the centroid for angle of inclination $\alpha$, 
but no equilibrium for angle $-\alpha$.
\item For all $\alpha\in(-\pi/2,\pi/2)$ there exist shapes $K$ which have stable equilibrium positions
with respect to $\alpha$ for the centroid.
\item For all small $\alpha> 0$ there exist shapes $K$ which have only one metastable equilibrium, and no other equilibrium,
with respect to $\alpha$ for the centroid.
\item For all small $\alpha> 0$ there exist shapes $K$ which have only one stable and one unstable equilibrium
with respect to $\alpha$ for the centroid. 
\end{enumerate}
\end{prop}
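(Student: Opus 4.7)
The plan is to reduce everything to an analysis of the scalar $2\pi$-periodic function $f := (\ln p)' = p'/p$. An equilibrium at inclination $\alpha$ is a zero of $f - \tan\alpha$, and at such a zero the stability criterion $p''(\varphi_0)-p(\varphi_0)\tan^2\alpha\gtrless 0$ is equivalent to $f'(\varphi_0)\gtrless 0$. Thus parts (1)--(4) amount to constructing strongly convex support functions $p$ (with $p>0$, $p+p''>0$ and centroid at the origin) whose graph of $f$ has a prescribed intersection pattern with the horizontal line at height $\tan\alpha$; this is exactly the analysis visualised in Figure~\ref{fig:equi}. A useful reduction is that translating the base point of $p$ by a vector $v$ changes $p$ by the first-harmonic term $v_1\cos\varphi+v_2\sin\varphi$, so the centroid of any strongly convex body can be placed at the origin by a unique such correction, without affecting strong convexity. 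We may therefore design the higher Fourier modes to control $f$ and absorb the centroid shift into small corrections in the first two modes afterwards.

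For part (1), take for instance $p(\varphi) = 1 + c_2\cos(2\varphi) + c_3\sin(3\varphi)$ (plus centroid correction) with small generic coefficients. Because $c_3\neq 0$, the symmetry $f(-\varphi)=-f(\varphi)$ is broken and generically $\max f>|\min f|$. Any $\alpha>0$ with $|\min f|<\tan\alpha<\max f$ then admits at least one equilibrium at inclination $\alpha$, while $\tan(-\alpha)<\min f$ forbids any equilibrium at $-\alpha$. For part (2), use an ellipse with semi-axes $a>b>0$ centred at the origin. A direct computation from $p(\varphi)=\sqrt{a^2\cos^2\varphi+b^2\sin^2\varphi}$ gives $\max f=\tfrac{a^2-b^2}{2ab}$, which tends to $+\infty$ as $a/b\to\infty$. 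Choose $a/b$ so that $\tan\alpha$ lies strictly inside $(\min f,\max f)$; then the continuous periodic function $f-\tan\alpha$ changes sign and has a zero where $f$ is strictly increasing, producing a stable equilibrium.

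For parts (3) and (4), the plan is a one-parameter saddle-node argument. Choose a smooth $2\pi$-periodic $\psi$, orthogonal to $\cos\varphi$ and $\sin\varphi$ in $L^2$, such that $\psi'$ has a unique non-degenerate global maximum at some $\varphi^\ast$, strictly separated from all other local maxima of $\psi'$ (for instance $\psi(\varphi)=-\cos(2\varphi)+a\cos(3\varphi)$ with a suitable small $a>0$). Work with the family $p_\epsilon = 1 + \epsilon\psi + \delta(\epsilon)\cos\varphi+\eta(\epsilon)\sin\varphi$, where $\delta(\epsilon),\eta(\epsilon)=O(\epsilon^2)$ are determined by the implicit function theorem so that the centroid of $K_\epsilon$ stays at the origin. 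For small $\epsilon>0$ one has $f_\epsilon = \epsilon\psi' + O(\epsilon^2)$ in $C^2$, so $\max f_\epsilon$ is continuous and strictly monotone in $\epsilon$ on a neighbourhood of $0$. Given small $\alpha>0$, by the intermediate value theorem there is $\epsilon_\ast=\epsilon_\ast(\alpha)$ with $\max f_{\epsilon_\ast}=\tan\alpha$; by uniqueness and non-degeneracy of the maximum of $\psi'$, the equation $f_{\epsilon_\ast}=\tan\alpha$ has a single double root near $\varphi^\ast$ and no other solution, yielding exactly one metastable equilibrium. This proves (3). For $\epsilon$ slightly larger than $\epsilon_\ast$ the standard saddle-node unfolding splits the double root into two nearby simple zeros, one ascending (stable) and one descending (unstable), with no other zeros of $f_\epsilon-\tan\alpha$, which proves (4).

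The main technical obstacle is excluding spurious equilibria away from $\varphi^\ast$. This is controlled by the strict separation of the global maximum of $\psi'$ from its other local maxima: by compactness, the gap persists under the $O(\epsilon^2)$ perturbations needed for centring, so for all sufficiently small $\epsilon$ the only solutions of $f_\epsilon=\tan\alpha$ remain in a fixed small neighbourhood of $\varphi^\ast$. Positivity of $p_\epsilon$ and $p_\epsilon+p_\epsilon''$ are open conditions, hence hold for small $\epsilon$, confirming strong convexity of $K_\epsilon$.
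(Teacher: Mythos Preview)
Your overall strategy coincides with the paper's: both reduce the problem to the graph of $f=(\ln p)'$ and read off equilibria as intersections with the horizontal line at height $\tan\alpha$, and both handle part~(2) via an ellipse (the paper notes two stable and two unstable equilibria whenever $\alpha<\arctan\tfrac{a^2-1}{2a}$, exactly your $\max f=\tfrac{a^2-b^2}{2ab}$). For parts~(3)--(4) the paper does essentially what you propose, only more concretely: it writes down an explicit one--parameter family $p_c$ with centroid at the origin, asserts that $p_c'/p_c$ has a unique maximum, sets $\tan\alpha_c=\max(p_c'/p_c)$, and observes that a slightly smaller angle gives exactly one stable and one unstable equilibrium; since $p_c'/p_c\to 0$ uniformly as $c\searrow 0$, this covers all small $\alpha$. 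Your saddle--node/implicit--function argument is a clean abstract version of the same construction, and your $O(\epsilon^2)$ centroid correction via first harmonics is exactly the device that produces the paper's explicit first--harmonic coefficients.

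The one place where your argument is weaker than the paper's is part~(1). You assert that for $p=1+c_2\cos(2\varphi)+c_3\sin(3\varphi)$ (plus centroid correction) one has ``generically $\max f>|\min f|$'', but breaking the symmetry $f(-\varphi)=-f(\varphi)$ does not by itself force the inequality in a particular direction; to leading order $f\approx -2c_2\sin(2\varphi)+3c_3\cos(3\varphi)$, and for either summand alone one still has $\max f=|\min f|$. The paper sidesteps this by simply exhibiting one explicit trigonometric polynomial (with numerical coefficients) for which one checks $p+p''>0$, centroid at the origin, and $\max(\ln p)'+\min(\ln p)'>0$. Your argument becomes complete once you either verify one explicit example, or note that $\varphi\mapsto -\varphi$ swaps $\max f$ and $-\min f$, so that whenever $\max f+\min f\neq 0$ one of $p(\varphi)$, $p(-\varphi)$ works, and then verify $\max f+\min f\neq 0$ for a single concrete choice.
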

 {\em Remark.} The last two properties are in sharp contrast to Proposition~\ref{number_equilibria} for $\alpha=0$.
\begin{proof}
Consider the support function $$p(\varphi)=3 -\tfrac{279}{8570} \sin(\varphi) + \tfrac{36}{857} \cos(\varphi)+ \tfrac3{10} \bigl(\sin(2 \varphi) + \cos(2 \varphi)\bigr) + \tfrac1{5}\cos(3 \varphi) .$$
One can check, that $p+p''>0$ and that $\max (\ln p)'+\min (\ln p)'>0$. Moreover, the centroid is at
the origin. So, for $\alpha$ such that
$-\min (\ln p)'<\tan(\alpha)<\max (\ln p)'$ the shape with this support function $p$ has the
property mentioned in the first part of the proposition.

For the second part, observe that the ellipse with half axis $a>1$ and $b=1$ has two
stable and two unstable equilibria with respect to its center for every angle $\alpha<\arctan(\frac{a^2-1}{2a})$.

\textcolor{black}{For the rest, let $$p_c(\varphi) = 3 + 3c (\cos(2 \varphi) +\sin(2 \varphi)) + 2c\cos(3 \varphi) + \tfrac{36 c^2}{9 - 43 c^2}
   \cos(\varphi) - \tfrac{9 (4 - 9 c) c^2}{9 - 43 c^2} \sin(\varphi).$$
Let $c>0$ be sufficiently small, so that $z=p_cu+p_c'u'$ parametrizes the boundary of a convex body $K$. By construction, the centroid of $K$ lies at the origin. One can check that the function $p_c'(\varphi)/p_c(\varphi)$ has a unique maximum for each such $c$. Choose $\alpha_c$ in such a way that $\tan(\alpha_c) = \max p'_c/p_c$. Then $K$ has exactly one equilibrium for $\alpha_c$ and for a slightly smaller angle one stable and one unstable equilibrium. Since
$\frac{p'_c}{p_c}$ converges uniformly  to $0$ for $c\searrow 0$ the claim follows.}
\end{proof}



In view of Theorem~\ref{main_arbitrary_zeros}
it is natural to ask, if the number $n_\alpha$ of oblique equilibria with respect to 
angle $\alpha>0$ can be obtained as $n_{\alpha}=2-2m_{\alpha}$, where $m_\alpha$ is the winding number of the evolute of a suitable modification of $\partial K$. Consider therefore again a strongly convex and compact set $K$ with $C^3$ boundary and such that the radius of curvature of $\partial K$ has only finitely many stationary points. Let $z=pu+p'u'$ be the usual $C^2$ parametrization of $\partial K$ and let $e=p'u'-p''u$ be the evolute of $\partial K$. Define $e_{\alpha}=e - \tan(\alpha)Jz$ and $p'_\alpha = p'-\tan(\alpha)p$. Let $p_\alpha$ be a primitive of $p'_\alpha$ with constant of integration large enough such that $p_\alpha + p''_\alpha =:\rho_\alpha>0$. In this case, $p_\alpha$ is again the support function of a curve $C_\alpha$ and the evolute of $C_\alpha$ is precisely $e_\alpha$.

\begin{prop}
If the curvature of $C_\alpha$ admits only finitely many stationary points, then the number $n_\alpha$ of oblique equilibria of $\partial K$ with respect to $O\in\R^2$ and
angle of inclination  $\alpha$ is given by
$n_\alpha = 2 - 2m_\alpha$,
where $m_\alpha\in\frac12\mathbb Z$ is the winding number of the evolute of $C_\alpha$ with respect to $O$.
\end{prop}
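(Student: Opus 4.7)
The plan is to reduce the statement to Theorem~\ref{main_arbitrary_zeros} applied to the auxiliary curve $C_\alpha$. The equilibrium condition~\eqref{eq:oblique} is equivalent to $p'_\alpha(\varphi)=0$, so $n_\alpha$ equals the number of zeros of $p'_\alpha$ on $[0,2\pi)$. Since $p_\alpha$ is the support function of $C_\alpha$, these zeros are, by definition, the horizontal equilibria of $C_\alpha$ with respect to $O$. Moreover, using~\eqref{eq-evolute} together with $Ju=u'$ and $Ju'=-u$, one checks directly that the evolute of $C_\alpha$ satisfies
$$
p'_\alpha u' - p''_\alpha u = (p'-\tan(\alpha)p)u'-(p''-\tan(\alpha)p')u = e - \tan(\alpha)\,Jz = e_\alpha,
$$
so that the winding number $m_\alpha$ in the statement is nothing but the winding number of the evolute of $C_\alpha$ around $O$, i.e.\ exactly the quantity produced by the argument of Theorem~\ref{main_arbitrary_zeros} applied to $C_\alpha$.

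I would then rerun the proof of Theorem~\ref{main_arbitrary_zeros} verbatim with $p$ replaced by $p_\alpha$. The hypotheses needed for that proof are all in place: $p_\alpha\in C^3$ since $p\in C^3$, $\rho_\alpha = p_\alpha+p''_\alpha>0$ by the construction of $p_\alpha$, and $\rho_\alpha$ has only finitely many stationary points by assumption. Thus the case distinction according to whether $O$ is a regular or a singular point of $e_\alpha$, the admissibility analysis of the zeros of $p'_\alpha$ (via the ODE $p_\alpha+p''_\alpha=\rho_\alpha$ exactly as in Theorem~\ref{main_arbitrary_zeros}), and the zero-counting integral
$$
n_\alpha = \frac{1}{\pi}\int_0^{2\pi}\frac{p''_\alpha(\varphi)^2-p'_\alpha(\varphi)p'''_\alpha(\varphi)}{p'_\alpha(\varphi)^2+p''_\alpha(\varphi)^2}\,\mathrm d\varphi
$$
all go through without change. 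The rotation $e_\alpha=R(\varphi)(-p''_\alpha,p'_\alpha)^\top$ then identifies this integral with $2-2m_\alpha$ via exactly the computation in~\eqref{directcomputation}.

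The main subtlety—and the only point where one has to be careful—is that $p_\alpha$ is generally \emph{not} $2\pi$-periodic: indeed $\int_0^{2\pi}p'_\alpha\,\mathrm d\varphi = -\tan(\alpha)L\ne 0$ for $\alpha\ne 0$, so $C_\alpha$ is a spiral-shaped curve rather than the boundary of a compact convex body, and Theorem~\ref{main_arbitrary_zeros} cannot be invoked as a black box. What rescues the argument is that the proof of that theorem only uses the quantities $p'_\alpha,p''_\alpha,p'''_\alpha$ and $\rho'_\alpha=p'_\alpha+p'''_\alpha$, all of which \emph{are} $2\pi$-periodic since they are derivatives of the periodic functions $p,p',p''$. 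Consequently the closed curve $e_\alpha$, the zero-counting integral, and both the rotation argument and the admissibility analysis remain meaningful and yield $n_\alpha=2-2m_\alpha$.
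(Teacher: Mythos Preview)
Your proposal is correct and follows essentially the same route as the paper: both arguments rerun the computation~\eqref{directcomputation} with $p$ replaced by $p_\alpha$, using that $e_\alpha$ is a piecewise $C^2$ immersion and that the zero-counting and winding-number integrals match. You are in fact more careful than the paper on one point: you explicitly flag that $p_\alpha$ fails to be $2\pi$-periodic (so $C_\alpha$ is not a closed convex boundary and Theorem~\ref{main_arbitrary_zeros} cannot be cited as a black box), and you correctly observe that this does not matter because only the periodic quantities $p'_\alpha,p''_\alpha,p'''_\alpha,\rho'_\alpha$ enter the argument.
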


\begin{proof}
Observe that $e_\alpha$ is a piecewise $C^2$ immersion, since $e$ is piecewise $C^2$, $z$ is of class $C^2$ and the number of zeros of $e_\alpha'$ is finite. In this case, the winding number $m_\alpha$ of $e_\alpha$ is given by
$$m_\alpha  = \frac{1}{2\pi}\int_0^{2\pi}\frac{\langle Je_\alpha,e_\alpha'\rangle}{\|e_\alpha\|^2}\,\mathrm d\varphi$$
and using $p'_{\alpha}=p'-\tan(\alpha)p$ we obtain
\begin{equation}\label{directcomputation:oblique}
-2m_\alpha + 2 = \frac{1}{\pi}\int_0^{2\pi}\frac{{p''_{\alpha}}^2-{p'_{\alpha}} p'''_{\alpha}}{{p'_{\alpha}}^2+{p''_{\alpha}}^2}\,\mathrm d\varphi=n_\alpha,
\end{equation}
in analogous manner to the case where $\alpha=0$.\end{proof}

{\em Remark.}
It is clear by definition that $e_\alpha$ diverges as $\alpha\to\pm\frac\pi2$, however, the renormalized perturbed evolute $e_\alpha/\tan(\alpha)$ converges to $-Jz$ as $\alpha\to\pm\frac\pi2$. Moreover $m_\alpha\to1$ as $\alpha\to \pm\frac{\pi}{2}$ so that $n_{\pm\frac\pi2}=0$, as expected.

\bibliographystyle{plain}
\bibliography{gomboc}
\end{document}